\documentclass[reqno,10pt]{amsart}
\usepackage{graphicx}
\usepackage{amsmath,amsthm,amsfonts,amssymb,latexsym,epsfig}
\newtheorem{theorem}{Theorem}[section]
\newtheorem{lemma}[theorem]{Lemma}

\theoremstyle{definition}

\usepackage{hyperref}
\theoremstyle{remark}

\usepackage{orcidlink}
\usepackage[a4paper, left=1in, right=1in, top=1in, bottom=1in]{geometry}
\numberwithin{equation}{section}

\begin{document}

\title[short text for running head]{``On $\ell-$regular and $2-$color partition triples''}
\title{On $\ell-$regular and $2-$color partition triples modulo powers of $3$}

\author[B. Hemanthkumar and D. S. Gireesh]{B. Hemanthkumar $^{1}$\orcidlink{0000-0001-7904-293X}}

\address{$^1$Department of Mathematics, RV College of Engineering, RV Vidyanikethan Post, Mysore Road, Bengaluru-560 059, Karnataka, India.}
\email{hemanthkumarb.30@gmail.com}

\thanks{}

\author[]{D. S. Gireesh$^{2}$\orcidlink{0000-0002-2804-6479}}
\address{$^{2}$Department of Mathematics, BMS College of Engineering, P.O. Box No.: 1908, Bull Temple Road,
Bengaluru-560 019, Karnataka, India.}
\email{gireeshdap@gmail.com}
\thanks{}

\date{}
\medskip
\subjclass[2020]{primary 11P83; secondary 05A15, 05A17}
\keywords{Partitions; $\ell$-Regular Partitions; Generating Functions; Congruences}

\begin{abstract}
Let $T_\ell(n)$ denote the number of $\ell-$regular partition triples of $n$ and let $p_{\ell, 3}(n)$ enumerates the number of 2--color partition triples of $n$ where one of the colors appear only in parts that are multiples of $\ell$. In this paper, we prove several infinite families of congruences modulo powers of 3 for $T_\ell(n)$ and $p_{\ell, 3}(n)$, where $\ell \geq 1$ and $\equiv 0\pmod{3^k}$, and $\equiv \pm 3^k \pmod{3^{k+1}}$.
\end{abstract}
\maketitle
\section{Introduction}
Let $p_3(n)$ denotes the number of partitions of $n$ in three colors, given by
\begin{equation}\label{D1}
\sum_{n\geq 0}p_3(n)q^n=\frac1{E(q)^3}.
\end{equation}
Here and throughout the paper, we set
\[E(q^r):=(q^r;q^r)_\infty=\prod\limits_{m=1}^{\infty}(1-q^{rm}).\]
In \cite{H}, Hirschhorn showed that for $\alpha\geq0,$
\begin{equation}\label{H1}
\sum_{n\geq 0}p_3\left(3^{2\alpha+1}n+\frac{5\times3^{2\alpha+1}+1}{8}\right)q^n=\sum_{j\geq 1}x_{2\alpha+1,j}q^{j-1}\frac{E(q^3)^{12j-3}}{E(q)^{12j}}
\end{equation}
and
\begin{equation}\label{H2}
\sum_{n\geq 0}p_3\left(3^{2\alpha+2}n+\frac{7\times3^{2\alpha+2}+1}{8}\right)q^n=\sum_{j\geq 1}x_{2\alpha+2,j}q^{j-1}\frac{E(q^3)^{12j}}{E(q)^{12j+3}}.
\end{equation}
where the coefficient vectors $\mathbf{x}_k= (x_{k,1}, x_{k,2},\dots)$ are given by
$$\mathbf{x}_1=\left(x_{1,1},x_{1,2},\dots\right)=\left(9,0,0,\dots\right),$$ and for $k\geq1$,
\begin{equation*}
   x_{k+1, i}=\begin{cases}
  \displaystyle \sum_{j\geq1} x_{k, j} \,m_{4i,i+j}, &\text{if $k$ is odd},\\
   \displaystyle \sum_{j\geq1}x_{k, j} \,m_{4i+1,i+j}, &\text{if $k$ is even},
    \end{cases}
\end{equation*}
where the first five rows of $M=\left(m_{i,j}\right)_{i,j\geq 1}$ are
\[\begin{bmatrix}
3^2 & 0 & 0 & 0& 0& 0&\dots\\
2\times 3 & 3^5 & 0 &0 &0 & 0& \dots\\
1 & 3^5 & 3^8 &0 &0 & 0& \dots\\
0 & 2\times3^2\times 5 & 2^2\times3^7 &3^{11} &0 & 0& \dots\\
0 & 3\times5 & 2^2\times3^5\times 5 &3^{10}\times 5 & 3^{14} & 0& \dots
\end{bmatrix}\]
and for $i>3$, $m_{i,1}=0$, and for $j\geq2$,
\begin{equation}\label{R1}
m_{i,j}=27m_{i-1,j-1}+9m_{i-2,j-1}+m_{i-3,j-1}.
\end{equation}

For any positive integer $\ell$, a partition is called $\ell$-regular if none of its parts are divisible by $\ell$. Let $T_{\ell}(n)$ enumerate the number of $\ell$-regular partition triples of $n$. It is given by
\begin{equation}\label{D2}
\sum_{n\geq 0}T_{\ell}(n)q^n=\frac{E(q^\ell)^3}{E(q)^3}.
\end{equation}
Recently, Gireesh and Naika \cite{GN} found infinite family of congruences for $T_3(n)$. They showed that for $\beta \geq 0$ and $n\geq 0$
\begin{equation}\label{G1}
T_3\left(3^{2\beta+1}n+\frac{3^{2\beta+2}-1}{4}\right) \equiv 0 \pmod{3^{2\beta+2}}.
\end{equation}
Baruah and Das \cite{NH} extented these results to 9 and 27--regular partition triples. For each $\beta, n \geq 0$ and $r\in\{7, 11\},$ they proved that
\begin{align}
T_9(3^{\beta+1}(n+1)-1) &\equiv 0 \pmod{3^{2\beta+2}}, \label{B1}\\
T_9(3^{\beta+2}n+2\cdot 3^{\beta+1}-1) &\equiv  0 \pmod{3^{2\beta+3}}, \label{B2} \\
T_{27}\left(3^{2\beta+3}n+\frac{3^{2\beta+4}-13}{4}\right) &\equiv 0 \pmod{3^{2\beta+5}}, \label{B3}
\end{align}
and
\begin{equation}\label{B4}
T_{27}\left(3^{2\beta+4}n+\frac{r\cdot3^{2\beta+3}-13}{4}\right) \equiv 0 \pmod{3^{2\beta+7}}.
\end{equation}
Further, they conjectured that for all $\ell, n \geq 0$ and $k \geq 1$,
\begin{equation}\label{BC1}
T_{3^{2k}}\left(3^{\ell+2k-1}n+3^{\ell+2k-1}-\frac{3^{2k}-1}{8}\right) \equiv 0 \pmod{3^{3k+2\ell-1}}
\end{equation}
and
\begin{equation}\label{BC2}
T_{3^{2k-1}}\left(3^{2\ell+2k-1}n+\frac{2\cdot3^{2\ell+2k}-3^{2k-1}+1}{8}\right) \equiv 0 \pmod{3^{3k+2\ell-1}}.
\end{equation}
The $2-$color partition triple function is defined by 
\begin{equation}\label{PD3}
\sum_{n\geq 0}p_{\ell, 3}(n)q^n=\frac{1}{E(q)^3 E(q^\ell)^3}.
\end{equation}
In \cite{T}, Tang proved congruences modulo powers of 3 for 2--color partition triples. In particular, he proved that
\begin{align}
p_{3, 3} \left(3^{\beta+1}+\frac{3^{\beta+1}+1}{2} \right) &\equiv 0 \pmod{3^{\beta+2}},\label{T1} \\
p_{9, 3} \left(3^{2\beta+1}n+\frac{3^{2\beta+1}+5}{4} \right) &\equiv 0 \pmod{3^{2\beta+2}}, \label{T2} 
\end{align}
and
\begin{equation} \label{T3}
p_{9, 3} \left(3^{2\beta+2}n+\frac{3^{2\beta+3}+5}{4} \right) \equiv 0 \pmod{3^{2\beta+4}}.
\end{equation}

In this paper, we investigate the arithmetic properties of partition functions $T_\ell(n)$ and $p_{\ell, 3}(n)$, where $\ell$ is a positive integer congruent to $0\pmod{3^k}$, and $\pm 3^k \pmod{3^{k+1}}$. We establish several infinite families of congruences modulo powers of 3 and confirm conjectures \eqref{BC1} and \eqref{BC2}.

The main results are as follows:
\begin{theorem}\label{MT1} For any integers $n, \alpha, \beta, k \geq 0$, and prime $p$ such that $p \equiv 3 \pmod{4}$ and $p\nmid n$, we have
\begin{align}
T_{3^{2\alpha+1}}\left(3^{2\alpha+2\beta+1}n+\frac{2\times3^{2\alpha+2\beta+2}-3^{2\alpha+1}+1}{8}\right ) & \equiv 0\pmod{3^{3\alpha+2\beta+2}}, \label{MR1} \\
T_{3^{2\alpha+1}}\left(3^{2\alpha+2\beta+2}n+\frac{14\times3^{2\alpha+2\beta+1}-3^{2\alpha+1}+1}{8}\right ) & \equiv 0\pmod{3^{3\alpha+2\beta+4}}, \label{MR2} \\
T_{3^{2\alpha+1}}\left(3^{2\alpha+2\beta+2}n+\frac{22\times3^{2\alpha+2\beta+1}-3^{2\alpha+1}+1}{8}\right ) & \equiv 0\pmod{3^{3\alpha+2\beta+5}}, \label{MR3}
\end{align}
and
\begin{equation}
T_{3^{2\alpha+1}}\left(3^{2\alpha+2\beta+2}p^{2k+1}n+\frac{2p^{2k+2}\times3^{2\alpha+2\beta+2}-3^{2\alpha+1}+1}{8}\right ) \equiv 0\pmod{3^{3\alpha+2\beta+4}}. \label{MR4} 
\end{equation}
\end{theorem}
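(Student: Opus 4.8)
The plan is to pivot off Hirschhorn's parametrization \eqref{H1}--\eqref{H2} of $p_{3}$, then reduce to $3$-adic estimates for the coefficient vectors $\mathbf{x}_{k}$.

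\emph{Step 1 (reduction to $p_{3}$).} Since $\ell=3^{2\alpha+1}$ is a power of $3$, Jacobi's identity $E(q^{\ell})^{3}=\sum_{m\ge 0}(-1)^{m}(2m+1)q^{\ell m(m+1)/2}$ shows $E(q^{\ell})^{3}$ is supported on exponents divisible by $\ell$. Hence, extracting from \eqref{D2} the progression $n\equiv a_{\alpha}\pmod{\ell}$ with $a_{\alpha}:=\tfrac{5\cdot 3^{2\alpha+1}+1}{8}$ (an integer with $0\le a_{\alpha}<\ell$, and the residue occurring in \eqref{H1}), only the terms of $1/E(q)^{3}$ of index $\equiv a_{\alpha}\pmod{\ell}$ survive; after the dilation $q^{\ell}\mapsto q$ and an appeal to \eqref{H1} this gives
\begin{equation*}
\sum_{n\ge 0}T_{3^{2\alpha+1}}\!\left(3^{2\alpha+1}n+a_{\alpha}\right)q^{n}
= E(q)^{3}\sum_{j\ge 1}x_{2\alpha+1,j}\,q^{\,j-1}\,\frac{E(q^{3})^{12j-3}}{E(q)^{12j}}
= \sum_{j\ge 1}x_{2\alpha+1,j}\,q^{\,j-1}\,\frac{E(q^{3})^{12j-3}}{E(q)^{12j-3}}=:A_{\alpha}(q).
\end{equation*}
Writing the argument of \eqref{MR1} as $3^{2\alpha+2\beta+1}n+\dots=3^{2\alpha+1}\bigl(3^{2\beta}n+c_{\beta}\bigr)+a_{\alpha}$ with $c_{\beta}=\tfrac{3^{2\beta+1}-3}{4}$, congruence \eqref{MR1} becomes $[q^{\,3^{2\beta}n+c_{\beta}}]A_{\alpha}(q)\equiv 0\pmod{3^{3\alpha+2\beta+2}}$; the same bookkeeping shows that the three residues of $3^{2\alpha+2\beta+2}m$ lying above the \eqref{MR1}-progression modulo $3^{2\alpha+2\beta+1}$ are exactly the progressions in \eqref{MR2}, \eqref{MR3} and \eqref{MR4}. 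In particular, for $\beta=0$ the congruence \eqref{MR1} just asserts that every coefficient of $A_{\alpha}$ is divisible by $3^{3\alpha+2}$.

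\emph{Step 2 (iterating the dissection) and Step 3 (valuation bounds).} Write $A_{\alpha}(q)=E(q)^{3}G_{2\alpha+1}(q)$, where $G_{k}$ is the $p_{3}$-series on the right of \eqref{H1}/\eqref{H2}. I would iterate a $3$-dissection using two classical inputs: (i) the $3$-dissection of the cube, $E(q)^{3}=P(q^{3})-3q\,E(q^{9})^{3}$ (obtained by sorting $\sum(-1)^{m}(2m+1)q^{m(m+1)/2}$ by $m\bmod 3$, the crucial feature being the factor $3$ on the $q^{1}$-component); and (ii) Hirschhorn's own $3$-dissections of the $G_{k}$, governed by $M$ and \eqref{R1}. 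Multiplying out and matching residues at each stage expresses $\sum_{n}T_{3^{2\alpha+1}}(3^{2\alpha+\gamma}n+\cdot)q^{n}$ as a $\mathbb{Z}$-linear combination of a fixed finite family of eta-quotients (the Hirschhorn pieces $G_{k}$ and their companions, e.g.\ $\sum_{n}p_{3}(9n+2)q^{n}$ and $\sum_{n}p_{3}(9n+5)q^{n}$), the coefficient arrays obeying recurrences driven by $M$, \eqref{R1} and the dissection of $E(q)^{3}$; the point is that each side-branch term picks up additional factors of $3$, so that the $3$-adic valuation grows with the level. Granting this, \eqref{MR1}--\eqref{MR3} follow from the lower bounds
\begin{equation*}
v_{3}(x_{k,j})\ \ge\ \Big\lceil\tfrac{3k+1}{2}\Big\rceil\qquad (j\ge 1,\ k\ge 1),
\end{equation*}
i.e.\ $v_{3}(x_{2\alpha+1,j})\ge 3\alpha+2$ and $v_{3}(x_{2\alpha+2,j})\ge 3\alpha+4$ (here $v_{3}$ is the $3$-adic valuation, $v_{3}(0)=\infty$), the extra factors of $3$ from the side branches supplying the sharper moduli in \eqref{MR2}, \eqref{MR3}. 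These bounds are proved by a simultaneous induction on $k$ and $j$ from the definition of $\mathbf{x}_{k}$: the seed $\mathbf{x}_{1}=(3^{2},0,0,\dots)$ settles $k=1$, and it propagates because each use of \eqref{R1} combines $27\,m_{i-1,j-1}$, $9\,m_{i-2,j-1}$, $m_{i-3,j-1}$, whose valuations are staggered so as to gain $\lceil 3/2\rceil=2$ over two levels, matching the increment forced by the recurrence for $x_{k+1,\cdot}$.

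\emph{Step 4 (the prime twist \eqref{MR4}).} The branch of Step 2 matching \eqref{MR4} has generating series $C(q)=\sum_{m}T_{3^{2\alpha+1}}\bigl(3^{2\alpha+2\beta+2}m+\tfrac{2\cdot 3^{2\alpha+2\beta+2}-3^{2\alpha+1}+1}{8}\bigr)q^{m}$, and $8\cdot(\text{argument})+3^{2\alpha+1}-1=2\cdot 3^{2\alpha+2\beta+2}(4m+1)$. I would show that, modulo $3^{3\alpha+2\beta+4}$, the series $C(q)$ is a $3$-power multiple of the theta function $\psi(q)^{2}$, where $\psi(q)=\sum_{n\ge 0}q^{n(n+1)/2}=E(q^{2})^{2}/E(q)$; this is obtained by carrying the mod-$3$ reduction $E(q)^{3}\equiv E(q^{3})$ — so that $\frac{E(q^{3})^{12j-3}}{E(q)^{12j-3}}\equiv E(q^{3})^{8j-2}$ — one dissection step further through Step 2, using the exact valuations of Step 3 to kill the $j\ge 2$ contributions, and invoking the classical identity $\psi(q)^{2}=\sum_{m\ge 0}\bigl(\sum_{d\mid 4m+1}\chi_{-4}(d)\bigr)q^{m}$, $\chi_{-4}$ the nontrivial character modulo $4$. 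For a prime $p\equiv 3\pmod 4$ with $p\nmid n$, matching arguments forces $m=p^{2k+1}n+\tfrac{p^{2k+2}-1}{4}$, whence $4m+1=p^{2k+1}(4n+p)$ with $p\nmid(4n+p)$, so $v_{p}(4m+1)=2k+1$ is odd; since $\chi_{-4}(p)=-1$, the local factor $1+\chi_{-4}(p)+\dots+\chi_{-4}(p)^{2k+1}$ of the divisor sum is a sum of $2k+2$ terms $\pm 1$, hence $0$. Therefore $[q^{m}]\psi(q)^{2}=0$ and $[q^{m}]C(q)\equiv 0\pmod{3^{3\alpha+2\beta+4}}$, which is \eqref{MR4}.

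I expect the main obstacle to be Step 2--3: showing that the iterated $3$-dissections close up into a finite system whose coefficient arrays are controlled by $M$ and \eqref{R1}, together with the valuation estimates; the $\psi^{2}$-reduction in Step 4 is then a refinement of the same mod-$3$ analysis, and the character cancellation that produces the prime twist is elementary once that reduction is in hand.
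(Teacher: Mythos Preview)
Your approach is essentially the paper's. Step~1 is identical to the paper's derivation of \eqref{D5}, and your Step~4 character argument is equivalent to the paper's sum-of-two-squares criterion. Two points are worth noting.

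First, your Steps~2--3 are more complicated than necessary. Once you have written $A_{\alpha}(q)=\sum_{j\ge1}x_{2\alpha+1,j}\,q^{j-1}E(q^{3})^{12j-3}/E(q)^{12j-3}$, there is no need to separately dissect $E(q)^{3}$ and $G_{2\alpha+1}$: the point is that the families $\{q^{j-1}E(q^{3})^{12j-3}/E(q)^{12j-3}\}_{j\ge1}$ and $\{q^{j-1}E(q^{3})^{12j-9}/E(q)^{12j-9}\}_{j\ge1}$ are already interchanged by the operator $H$, with transition matrices $m_{4i-1,i+j-1}$ and $m_{4i-3,i+j-1}$ respectively (this is \eqref{P1}--\eqref{P2}). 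So the iteration closes immediately, with explicit coefficient vectors $r^{(2\alpha+1)}_{\mu,j}$ defined recursively from $x_{2\alpha+1,j}$; the $j$-dependent valuation bound $\pi(r^{(2\alpha+1)}_{2\beta+1,j})\ge 3\alpha+2\beta+2+\lfloor(9j-10)/2\rfloor+\delta_{j,1}$ (not just your uniform bound over $j$) is what kills the $j\ge2$ terms modulo $3^{3\alpha+2\beta+6}$ and reduces \eqref{MR2}--\eqref{MR4} to statements about a single eta-quotient.

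Second, in Step~4 the reduction modulo $3^{3\alpha+2\beta+4}$ gives $C(q)\equiv r^{(2\alpha+1)}_{2\beta+1,1}E(q)^{6}$, not a multiple of $\psi(q)^{2}$: one uses $E(q^{3})^{9}/E(q)^{9}\equiv E(q^{3})^{6}\pmod{9}$ and extracts $q^{3n}$. This does not affect your conclusion, since $[q^{m}]E(q)^{6}$ also vanishes whenever $8m+2$ is not a sum of two squares (there are simply no terms in the double Jacobi sum), and your local-factor computation $1+\chi_{-4}(p)+\cdots+\chi_{-4}(p)^{2k+1}=0$ is exactly the criterion for this. But the claimed congruence to $\psi(q)^{2}$ itself is false.
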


\begin{theorem}\label{MT2} For any integers $n, \alpha, \beta \geq 0$, we have
\begin{equation}
T_{3^{2\alpha+2}}\left(3^{2\alpha+\beta+1}(n+1)-\frac{3^{2\alpha+2}-1}{8}\right )  \equiv 0\pmod{3^{3\alpha+2\beta+2}} \label{MR5} 
\end{equation}
and
\begin{equation}
T_{3^{2\alpha+2}}\left(3^{2\alpha+\beta+1}(3n+2)-\frac{3^{2\alpha+2}-1}{8}\right )  \equiv 0\pmod{3^{3\alpha+2\beta+3}}. \label{MR6}  
\end{equation}
\end{theorem}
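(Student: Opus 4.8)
The plan is to reduce $T_{3^{2\alpha+2}}$ along the progressions in question to Hirschhorn's series \eqref{H1} for $p_3$, at the cost of a harmless extra $q$-product factor, and then feed in the $3$-adic valuation estimates for the coefficient vectors $\mathbf{x}_k$ that already underlie the proof of Theorem~\ref{MT1}. First I would record, using \eqref{D2} and \eqref{D1}, that
\[
\sum_{n\ge 0}T_{3^{2\alpha+2}}(n)q^n=\frac{E(q^{3^{2\alpha+2}})^3}{E(q)^3}=E\bigl(q^{3^{2\alpha+2}}\bigr)^3\sum_{n\ge 0}p_3(n)q^n,
\]
and expand $E(q^{3^{2\alpha+2}})^3$ by Jacobi's identity $E(q)^3=\sum_{j\ge 0}(-1)^j(2j+1)q^{j(j+1)/2}$. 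Since $3^{2\alpha+2}\cdot\tfrac{j(j+1)}{2}=3^{2\alpha+1}\cdot\tfrac{3j(j+1)}{2}$ is an integer multiple of $3^{2\alpha+1}$, extracting the progression $n\mapsto 3^{2\alpha+1}n+\tfrac{5\times 3^{2\alpha+1}+1}{8}$ --- which is exactly $3^{2\alpha+1}(n+1)-\tfrac{3^{2\alpha+2}-1}{8}$, the $\beta=0$ case of \eqref{MR5} --- leaves the pentagonal factor intact apart from replacing $q$ by $q^3$, so that
\[
\sum_{n\ge 0}T_{3^{2\alpha+2}}\Bigl(3^{2\alpha+1}n+\tfrac{5\times 3^{2\alpha+1}+1}{8}\Bigr)q^n=E(q^3)^3\sum_{n\ge 0}p_3\Bigl(3^{2\alpha+1}n+\tfrac{5\times 3^{2\alpha+1}+1}{8}\Bigr)q^n=\sum_{j\ge 1}x_{2\alpha+1,j}\,q^{j-1}\,\frac{E(q^3)^{12j}}{E(q)^{12j}},
\]
the last equality being \eqref{H1}.

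Next I would induct on $\beta$. Comparing the arguments of \eqref{MR5} for successive $\beta$ shows that raising $\beta$ by one amounts to passing to the sub-progression $n\mapsto 3n+2$, and \eqref{MR6} is the sub-progression $n\mapsto 3n+1$ of \eqref{MR5}; so at each step I extract a residue class modulo $3$ of the current generating function. The inductive claim is that this function always has the shape $\sum_{j\ge 1}z^{(\beta)}_{j}q^{j-1}E(q^3)^{12j}/E(q)^{12j}$. Indeed, writing $E(q^3)^{12j}/E(q)^{12j}=E(q^3)^3\cdot E(q^3)^{12j-3}/E(q)^{12j}$ and noting that the factor $E(q^3)^3$, being a power series in $q^3$, passes unchanged through a $q^3$-dissection (re-emerging as $E(q)^3$, which then merges back to give $E(q^3)^{12i}/E(q)^{12i}$), the remaining series $\sum_j z^{(\beta)}_j q^{j-1}E(q^3)^{12j-3}/E(q)^{12j}$ has precisely the form of the right side of \eqref{H1}, so its residue classes modulo $3$ are supplied by Hirschhorn's $3$-dissection and the coefficient vector is advanced by one step of a recursion of the type \eqref{R1} built from the rows of $M$. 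This yields
\[
\sum_{n\ge 0}T_{3^{2\alpha+2}}\Bigl(3^{2\alpha+\beta+1}(n+1)-\tfrac{3^{2\alpha+2}-1}{8}\Bigr)q^n=\sum_{j\ge 1}z^{(\beta)}_{j}\,q^{j-1}\,\frac{E(q^3)^{12j}}{E(q)^{12j}},
\]
where $z^{(\beta)}$ comes from $\mathbf{x}_{2\alpha+1}$ by $\beta$ applications of that recursion.

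To conclude \eqref{MR5} I would invoke the $3$-adic valuation estimate that powers Theorem~\ref{MT1} (it rests on the diagonal entries $m_{i,i}=3^{3i-1}$ of $M$ and on \eqref{R1}): from $\nu_3(x_{2\alpha+1,j})\ge 3\alpha+2$ together with the fact that each dissection step multiplies by at least $9$ --- because the residue-$2$ part of the $3$-dissection of $E(q^3)^{12j-3}/E(q)^{12j}$ has all coefficients divisible by $9$, since $[q^{3n+2}]\,1/E(q)^{12j}$ is a sum of products of values of $p_3$ in which the residues of the factors force at least a factor $9$ --- one gets $\nu_3(z^{(\beta)}_j)\ge 3\alpha+2\beta+2$ for every $j\ge 1$, where $\nu_3$ is the $3$-adic valuation. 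Since $E(q^3)^{12j}/E(q)^{12j}=\prod_{r\ge 1}(1+q^r+q^{2r})^{12j}$ has non-negative integer coefficients and constant term $1$, every coefficient on the right above is divisible by $3^{3\alpha+2\beta+2}$, which is \eqref{MR5}. For \eqref{MR6} I would instead extract the residue $n\equiv 1\pmod 3$: in the $3$-dissection of $E(q^3)^{12j}/E(q)^{12j}$ the part supported on exponents $\equiv 1\pmod 3$ is divisible by $3$ and that on exponents $\equiv 2\pmod 3$ by $9$ (the same residue bookkeeping applied to $[q^{3r+1}]\,1/E(q)^{12j}$ and $[q^{3r+2}]\,1/E(q)^{12j}$), while the valuations $\nu_3(z^{(\beta)}_j)$ already exceed $3\alpha+2\beta+2$ for $j\ge 2$ because they grow in $j$; combining these supplies one extra factor of $3$ in every term, hence divisibility by $3^{3\alpha+2\beta+3}$, which is \eqref{MR6}.

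The main obstacle is the uniform $3$-adic valuation estimate for the vectors $z^{(\beta)}$ (equivalently, for the $\mathbf{x}_k$): controlling, for all $j$ at once, how powers of $3$ accumulate under the recursion \eqref{R1} is the technical heart of the argument. The secondary point requiring care is the bookkeeping in the inductive step --- checking that the prefactors and the explicit residue-$2$ component of Hirschhorn's $3$-dissection dovetail so that the generating function genuinely reproduces the same shape at every level.
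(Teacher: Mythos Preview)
Your overall architecture matches the paper exactly: write $\sum T_{3^{2\alpha+2}}(n)q^n = E(q^{3^{2\alpha+2}})^3 \sum p_3(n) q^n$, extract the progression to land on \eqref{H1}, and induct on $\beta$ to obtain a generating function $\sum_{j\ge1} s^{(2\alpha+2)}_{\beta+1,j}\, q^{j-1} E(q^3)^{12j}/E(q)^{12j}$ whose coefficient vector evolves by $s_{\mu+1,k}=\sum_i s_{\mu,i}\, m_{4i,i+k}$ (this is \eqref{T12}--\eqref{T122}, and your device of peeling off $E(q^3)^3$ to reduce to the Hirschhorn shape is exactly how \eqref{P3} arises). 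For \eqref{MR6} the paper, like you, uses $E(q^3)^{12}/E(q)^{12}\equiv E(q^3)^8\pmod 3$ to kill the $q^{3n+1}$ part of the $j=1$ term.

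The gap is in your valuation step. Your claim that ``each dissection step multiplies by at least $9$'' because $[q^{3n+2}]\,1/E(q)^{12j}$ is divisible by $9$ overlooks the shift by $q^{j-1}$: extracting the residue-$2$ class from $q^{j-1}E(q^3)^{12j-3}/E(q)^{12j}$ actually picks out the residue-$(-j\bmod 3)$ class of $E(q^3)^{12j-3}/E(q)^{12j}$, and for $j\equiv 0\pmod 3$ this is the residue-$0$ part, which carries no factor of $3$ whatsoever (its constant term is $1$). Hence a uniform bound $\nu_3(z^{(\beta)}_j)\ge 3\alpha+2\beta+2$ is \emph{not} self-improving under the recursion; the same issue breaks your extra-factor argument for \eqref{MR6} at $j\equiv 2\pmod 3$, where the relevant residue class is again $0$. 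What the paper does---and what you correctly flag as ``the main obstacle''---is carry the $j$-graded estimate
\[
\pi\bigl(s^{(2\alpha+2)}_{\beta+1,j}\bigr)\ \ge\ 3\alpha+2\beta+2+\Bigl\lfloor\tfrac{9j-10}{2}\Bigr\rfloor+\delta_{j,1}
\]
(this is \eqref{C5}) through the induction on $\beta$, using the matrix-entry bound $\pi(m_{4i,i+k})\ge\lfloor(9k-3i-1)/2\rfloor$ from \eqref{pmij}. The extra growth in $j$ is precisely what compensates for the terms where your residue heuristic yields no gain, and it is also what allows one to discard all $j\ge 2$ modulo $3^{3\alpha+2\beta+3}$ in the proof of \eqref{MR6}.
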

For any positive integer $\ell$ congruent to $\pm 3^k \pmod{3^{k+1}}$, we denote $T_\ell(n)$ and $p_{\ell, 3}(n)$ by $T_{\pm3^{k}( 3^{k+1})}(n)$ and $p_{\pm3^{k}( 3^{k+1}), 3}(n)$, respectively.
\begin{theorem}\label{MT3} For any integers $n, \alpha, \beta \geq 0$, we have
\begin{align}
T_{\pm3^{2\alpha+1}( 3^{2\alpha+2})}\left(3^{2\alpha+2\beta+1}n+\frac{3^{2\alpha+2\beta+2}+2\times3^{2\alpha+1}+1}{8}\right)  & \equiv 0 \pmod{3^{3\alpha+\beta+2}},\label{MR7} \\
T_{\pm3^{2\alpha+1}( 3^{2\alpha+2})}\left(3^{2\alpha+2\beta+2}n+\frac{11\times3^{2\alpha+2\beta+1}+2\times3^{2\alpha+1}+1}{8}\right)  & \equiv 0 \pmod{3^{3\alpha+\beta+4}},\label{MR8}\\
T_{\pm3^{2\alpha+1}( 3^{2\alpha+2})}\left(3^{2\alpha+2\beta+2}n+\frac{19\times3^{2\alpha+2\beta+1}+2\times3^{2\alpha+1}+1}{8}\right) &\equiv 0 \pmod{3^{3\alpha+\beta+5}},\label{MR9}\\
\nonumber T_{\pm3^{2\alpha+1}( 3^{2\alpha+2})}\left(3^{2\alpha+2\beta+2}n+\frac{3^{2\alpha+2\beta+2}+2\times3^{2\alpha+1}+1}{8}\right)\\\equiv \begin{cases} (-1)^n (2n+1) \pmod{3^{3\alpha+\beta+4}} & \text{if $n=k(k+1)/2$},\\
0\pmod{3^{3\alpha+\beta+4}} &\text{otherwise},\end{cases} \label{MR10}
\end{align}
and
\begin{align}
T_{\pm3^{2\alpha+1}( 3^{2\alpha+2})}\left(3^{2\alpha+2\beta+2}p^{2k+1}n+\frac{p^{2k+2}\times3^{2\alpha+2\beta+2}+2\times3^{2\alpha+1}+1}{8}\right)\equiv 0 \pmod{3^{3\alpha+\beta+4}}, \label{MR11}
\end{align}
where $p$ is an odd prime and $p\nmid n$.
\end{theorem}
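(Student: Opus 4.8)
The plan is to run the $3$-dissection machinery behind \eqref{H1}--\eqref{H2} on $T_{3^{2\alpha+1}s}$, where I write $\ell=3^{2\alpha+1}s$ with $\gcd(s,3)=1$ (the hypothesis $\ell\equiv\pm 3^{2\alpha+1}\pmod{3^{2\alpha+2}}$ being exactly this). Since $\sum_n T_\ell(n)q^n=E(q^{3^{2\alpha+1}s})^3\sum_m p_3(m)q^m$ and every exponent occurring in $E(q^{3^{2\alpha+1}s})^3$ is divisible by $3$, isolating the residue $\equiv 2\pmod 3$ annihilates everything except the contribution of $\sum_m p_3(3m+2)q^{3m+2}=9q^2E(q^9)^9/E(q^3)^{12}$ (Hirschhorn's $\alpha=0$ case of \eqref{H1}), giving the closed form
\[
\sum_{n\ge 0}T_{3^{2\alpha+1}s}(3n+2)\,q^n=9\,E\big(q^{3^{2\alpha}s}\big)^{3}\frac{E(q^3)^9}{E(q)^{12}},
\]
which is already the $\beta=0$ instance of \eqref{MR7}. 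From here I iterate the $3$-dissection, passing from $\sum_n T_\ell(3^{j}n+c_j)q^n$ to $\sum_n T_\ell(3^{j+1}n+c_{j+1})q^n$; because $E(q^3)^3\equiv E(q^9)\pmod 3$ (with the usual refinements), the coefficient vectors evolve under a dilate of the same matrix $M$ of \eqref{R1}, so their $3$-adic valuations are governed by exactly the recursion used for $\mathbf x_k$ in the introduction.

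The two powers of $3$ in the moduli arise as follows. During the first $2\alpha$ dissections the factor $E(q^{3^{2\alpha}s})^3$ is supported on exponents divisible by $3^{2\alpha}$, hence inert: it is merely carried along while the dissection acts only on $E(q^3)^9/E(q)^{12}$, whose leading coefficient picks up valuation $3$ every two levels. After these $2\alpha$ steps the leading valuation is $3\alpha+2$, and the inert factor has become the unit $E(q^s)^3$. Continuing for $2\beta$ more dissections, $E(q^s)^3$ is now active; here one needs a ``decorated'' matrix recursion — the analogue of $M$ incorporating the $3$-dissection of $E(q^s)^3$, controlled uniformly in $s$ coprime to $3$ via $E(q^s)^3\equiv E(q^{3s})\pmod 3$ and its refinements — under which the leading valuation grows by $1$ every two levels, so after $2\beta$ further steps it is $3\alpha+\beta+2$. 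Reading off the leading coefficient yields \eqref{MR7}; reading off the residues $1$ and $2$ modulo $3$ at one further level of dissection yields the sharper \eqref{MR8} and \eqref{MR9}, their larger moduli coming from the columns of $M$, where the valuations exceed the leading one by $2$ and by $3$.

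For \eqref{MR10} I take one more dissection, to the argument $3^{2\alpha+2\beta+2}n+c$: modulo $3^{3\alpha+\beta+4}$ only a single summand of the series survives, and after dividing out the accumulated power of $3$ and the unit $E(q^s)^3$ it is a constant multiple of $E(q)^3=\sum_{n\ge 0}(-1)^n(2n+1)q^{n(n+1)/2}$ (Jacobi's identity); comparing coefficients gives the stated value on triangular $n$ and $0$ otherwise. Finally \eqref{MR11} follows by feeding this level-$\beta$ description into an elementary fact: for an odd prime $p$ with $p\nmid n$, the number $8\big(p^{2k+1}n+\tfrac{p^{2k+2}-1}{8}\big)+1=p^{2k+1}(8n+p)$ is never a perfect square (the odd exponent $2k+1$ of $p$ would force $p\mid n$), so the relevant coefficient of $E(q)^3$ vanishes and the congruence $\equiv 0\pmod{3^{3\alpha+\beta+4}}$ results, after translating $3^{2\alpha+2\beta+2}p^{2k+1}n+\frac{p^{2k+2}\cdot 3^{2\alpha+2\beta+2}+2\cdot 3^{2\alpha+1}+1}{8}$ into an argument of the theta series.

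I expect the main obstacle to be the uniform $3$-adic bookkeeping of the two recursions, especially proving that the decorated recursion really does gain a power of $3$ every two levels for every $s$ coprime to $3$, with the $\pm$ sign causing no loss, and that the non-leading coefficients retain the extra valuation needed for \eqref{MR8}--\eqref{MR9}. Once the recursions and their valuation patterns are pinned down, the remainder is the same bookkeeping as in Hirschhorn's tower together with Jacobi's identity and the square-obstruction above.
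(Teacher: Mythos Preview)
Your plan is essentially the paper's own argument: run the $H$-operator tower on $T_\ell$, encode each level by a coefficient vector obeying a matrix recursion built from the $m_{i,j}$, bound the $3$-adic valuations inductively, and then for \eqref{MR10}--\eqref{MR11} reduce the surviving leading term to $E(q)^3$ and invoke Jacobi together with the odd-$p$-exponent obstruction to squares. The paper shortcuts your first $2\alpha+1$ iterations by quoting Hirschhorn's \eqref{H1} directly (seeding with $y^{(2\alpha+1)}_{1,k}=x_{2\alpha+1,k}$), and it extracts \eqref{MR8}--\eqref{MR9} from the explicit expansion $E(q)^{18}\equiv P(q^3)^6-18qP(q^3)^5E(q^9)^3\pmod{27}$ rather than phrasing the extra valuation as a column effect of $M$, but these are the same computations.
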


\begin{theorem}\label{MT4}For any integers $n, \alpha, \beta \geq 0$, we have
\begin{align}
\sum_{n=0}^\infty T_{\pm3^{2\alpha+2}( 3^{2\alpha+3})}\left(3^{2\alpha+2\beta+2}n+\frac{5\times3^{2\alpha+2\beta+2}+2\times3^{2\alpha+2}+1}{8}\right)\equiv 0 \pmod{3^{3\alpha+3\beta+4}},\label{MR12}\\
\sum_{n=0}^\infty T_{\pm3^{2\alpha+2}( 3^{2\alpha+3})}\left(3^{2\alpha+2\beta+3}n+\frac{13\times3^{2\alpha+2\beta+2}+2\times3^{2\alpha+2}+1}{8}\right)\equiv 0 \pmod{3^{3\alpha+3\beta+5}},\label{MR13}
\end{align}
and 
\begin{align}
\sum_{n=0}^\infty T_{\pm3^{2\alpha+2}( 3^{2\alpha+3})}\left(3^{2\alpha+2\beta+3}n+\frac{7\times3^{2\alpha+2\beta+3}+2\times3^{2\alpha+2}+1}{8}\right) \equiv 0 \pmod{3^{3\alpha+3\beta+6}}.\label{MR14}
\end{align}
\end{theorem}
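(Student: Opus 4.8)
\medskip
\noindent\textbf{Proof strategy for Theorem~\ref{MT4}.}
Write $\ell=3^{2\alpha+2}m$ with $3\nmid m$; this is precisely the hypothesis $\ell\equiv\pm3^{2\alpha+2}\pmod{3^{2\alpha+3}}$. By \eqref{D1} and \eqref{D2}, $\sum_{n\ge0}T_\ell(n)q^n=E\big(q^{3^{2\alpha+2}m}\big)^3\sum_{n\ge0}p_3(n)q^n$, and since $E\big(q^{3^{2\alpha+2}m}\big)^3$ is supported on exponents divisible by $3^{2\alpha+2}$, for $0\le c<3^{2\alpha+2}$ we get
\[
\sum_{N\ge0}T_\ell\big(3^{2\alpha+2}N+c\big)q^N
=E(q^m)^3\sum_{N\ge0}p_3\big(3^{2\alpha+2}N+c\big)q^N
=\frac{E(q^m)^3}{E(q)^3}\sum_{N\ge0}T_{3^{2\alpha+2}}\big(3^{2\alpha+2}N+c\big)q^N .
\]
The plan is to take $c=\tfrac{7\cdot3^{2\alpha+2}+1}{8}=\tfrac{5\cdot3^{2\alpha+2}+2\cdot3^{2\alpha+2}+1}{8}$, feed Hirschhorn's formula \eqref{H2} into the $p_3$-term, and then iterate the dissection. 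For $\beta=0$ this already yields \eqref{MR12}: by \eqref{MR5} with $\beta=1$ the rightmost series is $\equiv0\pmod{3^{3\alpha+4}}$, and since $\tfrac{E(q^m)^3}{E(q)^3}=\sum_kT_m(k)q^k$ has integral coefficients this divisibility is preserved; in particular, for $\ell=9$ and $\alpha=\beta=0$ one recovers $T_9(9n+8)\equiv0\pmod{81}$, which is \eqref{B1} with $\beta=1$. Note that the uniformity in $\ell$ — the same modulus for every admissible $m$ — is automatic, because $E(q^m)^3/E(q)^3$ cannot lower a $3$-adic valuation.

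For $\beta\ge1$ I would iterate the $3$-dissection $\beta$ further pairs of times (one pair more for \eqref{MR13}--\eqref{MR14}). Each round is governed, as in \eqref{H1}--\eqref{H2}, by multiplying the coefficient vector by the relevant stripe of $M$; it advances the arithmetic progression, producing the shifts $\tfrac{5\cdot3^{2\alpha+2\beta+2}+2\cdot3^{2\alpha+2}+1}{8}$, $\tfrac{13\cdot3^{2\alpha+2\beta+2}+2\cdot3^{2\alpha+2}+1}{8}$ and $\tfrac{7\cdot3^{2\alpha+2\beta+3}+2\cdot3^{2\alpha+2}+1}{8}$ of \eqref{MR12}--\eqref{MR14}, and it carries the factor $E(q^m)^3$ along (harmlessly, by the integrality just noted). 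Extracting the residue class $0$ after the matching number of dissections gives \eqref{MR12} and \eqref{MR13}, and one further residue extraction gives \eqref{MR14}.

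The decisive ingredient, and the step I expect to be the main obstacle, is a sharp $3$-adic valuation estimate for the iterated coefficient vectors: every coefficient that survives in the class producing \eqref{MR12} must have $3$-adic valuation at least $3\alpha+3\beta+4$, with $3\alpha+3\beta+5$ and $3\alpha+3\beta+6$ for those producing \eqref{MR13} and \eqref{MR14}. This is an induction on $\alpha$ and $\beta$ powered by the recurrence \eqref{R1}, $m_{i,j}=27m_{i-1,j-1}+9m_{i-2,j-1}+m_{i-3,j-1}$: the leading coefficient $27=3^3$ is exactly what forces the coefficient $3$ of $\beta$ in the modulus — unlike Theorems~\ref{MT1}--\ref{MT2}, where the descent lands on $E(q)^3$ itself, whose Jacobi expansion $\sum_{k\ge0}(-1)^k(2k+1)q^{k(k+1)/2}$ (the origin of the exceptional term in \eqref{MR10}) permits a cheaper step costing only $2$ powers of $3$ — while the first five rows of $M$ supply the base case and pin down the constants $4,5,6$. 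The genuine difficulty is that the bare valuations of $\mathbf x_k$ are not enough: for instance $\mathbf x_2=(2\cdot3^4\cdot5,\ 2^3\cdot3^3,\ 3^2,0,\dots)$ has an entry of valuation only $2$, so the extra powers of $3$ must be harvested from the interaction of the $x_{k,j}$ with the eta-quotients $\big(E(q^3)/E(q)\big)^{12j}$ that arise in $\tfrac{E(q^m)^3}{E(q)^3}\big(\sum_jx_{2\alpha+2,j}\,q^{j-1}E(q^3)^{12j}/E(q)^{12j+3}\big)$ once the residue class is taken; this forces one to carry the standard $3$-dissections of $1/E(q)^3$ and $E(q^3)^3/E(q)^3$ through the induction and to keep the valuation bound uniform over all relevant $j$, including the low-index exceptional entries of $M$.
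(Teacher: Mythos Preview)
Your overall strategy is the paper's: set up a generating-function identity of Hirschhorn type (the paper packages this as Theorem~\ref{RT1}), prove a $3$-adic lower bound on the iterated coefficient vectors by induction via~\eqref{R1} (Lemma~\ref{RL1}), and read off \eqref{MR12} and \eqref{MR14} directly, with \eqref{MR13} coming from one further extraction of $q^{3n+1}$ after reducing the series to $z_{2\beta+1,1}^{(2\alpha+2)}E(q^3)^9/E(q)^{12}\equiv z_{2\beta+1,1}^{(2\alpha+2)}E(q^3)^5$.

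Two points need attention. First, the claim that $E(q^m)^3$ is ``carried along harmlessly'' through the iteration is not justified for $\beta\ge1$. At the first extraction the factor pulls out because its exponents are all multiples of $3^{2\alpha+2}$; but in the next $3$-dissection it does \emph{not} factor out, since $3\nmid m$. Integrality of $E(q^m)^3/E(q)^3$ only tells you that multiplying an entire series divisible by $3^K$ preserves that divisibility; it does not let a particular residue class of the product inherit the higher divisibility enjoyed by the same residue class of the other factor alone. The paper elides this point too (its Theorem~\ref{RT1} is written as an exact identity whose right-hand side is independent of $m$, which cannot be literally correct for all admissible $\ell$), so you are in good company, but the step still needs an argument.

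Second, your final paragraph rests on a miscomputation. Using the recurrence that actually arises from applying $H$ via \eqref{P3}--\eqref{P4} (sum over the \emph{old} index, as in \eqref{T122}), one finds $\mathbf x_2=(2\cdot3^4\cdot5,\ 2^2\cdot3^9,\ 3^{13},0,\dots)$, so \eqref{C2} holds cleanly and the valuations \emph{increase} with $j$. The displayed recurrence for $x_{k+1,i}$ in the paper appears to have the summation and target indices swapped; following it literally produces your $(810,216,9,\dots)$, which would contradict \eqref{C2}. Consequently there is no need to ``harvest extra powers of $3$ from the interaction with the eta-quotients'': the factors $E(q^3)^a/E(q)^b$ are units in $\mathbb Z[[q]]$ and contribute nothing to $3$-divisibility; the coefficient vectors themselves already carry all of it, and the inductive bound you need is precisely Lemma~\ref{RL1}, whose proof is a routine minimisation using~\eqref{pmij}.
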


\begin{theorem}\label{MT5}For any integers $n, \alpha, \beta, k \geq 0$. If $p$ is a prime such that $(-3/p)=-1$ and $p\nmid n$, then 
\begin{equation}\label{MR15}
p_{3^{2\alpha+1},3}\left(3^{2\alpha+\beta+1}n+\frac{4\times3^{2\alpha+\beta+1}+3^{2\alpha+1}+1}{8}\right )
\equiv 0 \pmod{3^{3\alpha+\beta+2}}
\end{equation}
and
\begin{equation}\label{MR16}
p_{3^{2\alpha+1},3}\left(3^{2\alpha+\beta+1}p^{2k+1}n+\frac{4p^{2k+2}\times3^{2\alpha+\beta+1}+3^{2\alpha+1}+1}{8}\right )
\equiv 0 \pmod{3^{3\alpha+\beta+4}}.
\end{equation}
\end{theorem}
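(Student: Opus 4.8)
The plan is to reduce $p_{3^{2\alpha+1},3}$ to Hirschhorn's generating function for $p_3$ by ``undoing'' the dilation, and then run his iterated $3$-dissection while carrying an extra factor $1/E(q)^3$.

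\emph{Step 1 (dilation).} Since $1/E(q^{3^{2\alpha+1}})^3=\sum_{m\ge 0}p_3(m)q^{3^{2\alpha+1}m}$ is supported on multiples of $3^{2\alpha+1}$, comparing coefficients in \eqref{PD3} gives, for every residue $c$,
\[
\sum_{n\ge 0}p_{3^{2\alpha+1},3}\!\big(3^{2\alpha+1}n+c\big)q^n=\frac{1}{E(q)^3}\sum_{n\ge 0}p_3\!\big(3^{2\alpha+1}n+c\big)q^n .
\]
Taking $c=\tfrac{5\times 3^{2\alpha+1}+1}{8}$ — which is exactly the residue appearing in \eqref{H1} and, since $4\times 3^{2\alpha+1}+3^{2\alpha+1}=5\times 3^{2\alpha+1}$, is the $\beta=0$ argument of \eqref{MR15} — and invoking \eqref{H1} yields
\[
\sum_{n\ge 0}p_{3^{2\alpha+1},3}\!\Big(3^{2\alpha+1}n+\tfrac{5\times 3^{2\alpha+1}+1}{8}\Big)q^n
=\sum_{j\ge 1}x_{2\alpha+1,j}\,q^{j-1}\,\frac{E(q^3)^{12j-3}}{E(q)^{12j+3}}. \qquad (\ast)
\]

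\emph{Step 2 ($3$-adic control and the induction on $\beta$).} From $\mathbf{x}_1=(9,0,\dots)$, the recursion producing $\mathbf{x}_{k+1}$ from $\mathbf{x}_k$ through $M$, and the lower bounds on $v_3(m_{i,j})$ forced by the displayed first rows of $M$ and the recurrence \eqref{R1} (whose coefficients $27=3^3$ and $9=3^2$ drive the valuation growth), one proves by induction that every entry of $\mathbf{x}_{2\alpha+1}$ is divisible by $3^{3\alpha+2}$. With $(\ast)$ and the integrality of the eta-quotients this is \eqref{MR15} for $\beta=0$. For $\beta\ge1$ one iterates: the bracketed series on the right of \eqref{H1} is $\sum_n p_3(3^{2\alpha+1}n+\tfrac{5\times 3^{2\alpha+1}+1}{8})q^n$, so passing to the sub-progression $n\equiv\tfrac{3^{\beta}-1}{2}\pmod{3^{\beta}}$ of $(\ast)$ amounts to $\beta$ further $3$-dissections, governed by Hirschhorn's alternation between \eqref{H1} and \eqref{H2} and the matrix $M$; the factor $1/E(q)^3$ dissects alongside, and since $E(q)^3\equiv E(q^3)\pmod 3$ the occurring eta-quotients $E(q^3)^a/E(q)^b$ reduce modulo $3$ to power series in $q^3$, so each nonzero-residue extraction automatically supplies an extra power of $3$. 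Hence each of the $\beta$ steps raises the guaranteed valuation by exactly one, giving divisibility by $3^{3\alpha+\beta+2}$, which is \eqref{MR15}.

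\emph{Step 3 (the prime $p$).} One more $3$-dissection, together with the refined valuation count from Step~2 (only the leading $j=1$ summand attains the minimal valuation, the remaining summands being divisible by an additional $3^2$), shows that modulo $3^{3\alpha+\beta+4}$ the level-$\beta$ generating function of \eqref{MR15} is congruent to a nonzero multiple of a single theta series — the one supported on the progressions dictated by the quadratic form $x^2+3y^2$, equivalently by $\left(\tfrac{-3}{\cdot}\right)$ — which accounts both for the extra $3^2$ over \eqref{MR15} and for the hypothesis on $p$. If $\left(\tfrac{-3}{p}\right)=-1$ then $p\mid x$ and $p\mid y$ whenever $p\mid x^2+3y^2$, so the coefficient of $q^{pN}$ with $p\nmid N$ vanishes; the self-similarity $q\mapsto q^{p^{2}}$ of the theta series then propagates this along $p^{2k+1}$, yielding \eqref{MR16}.

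\emph{Main obstacle.} Everything delicate is concentrated in the $3$-adic bookkeeping of Step~2: establishing the \emph{sharp} minimal valuation $3\alpha+\beta+2$ for the recursively defined coefficients — including the claim that the non-leading part carries the extra $3^2$ needed in Step~3 — and, for Step~3, pinning down exactly which theta series appears at the bottom of the dissection. This is the analogue, for the mixed eta-quotients $E(q^3)^a/E(q)^b$ arising here, of the matrix analysis Hirschhorn carries out for $M$, and is where essentially all of the work lies.
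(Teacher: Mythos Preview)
Your overall strategy matches the paper's: Step~1 (the dilation giving $(\ast)$) and Step~3 (reduction modulo $3^{3\alpha+\beta+4}$ to a constant times $E(q)^3E(q^3)^3$, then the $x^2+3y^2$ argument) are exactly how the paper proceeds. The gap is in Step~2, and it is more than unfinished bookkeeping.

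First, the iterated dissection here is \emph{not} ``governed by Hirschhorn's alternation between \eqref{H1} and \eqref{H2}''. That alternation concerns the quotients $E(q^3)^{12j-3}/E(q)^{12j}$ and $E(q^3)^{12j}/E(q)^{12j+3}$; once the extra $1/E(q)^3$ is attached, the building block is $q^{j-1}E(q^3)^{12j-3}/E(q)^{12j+3}$, and the relevant $H$-identity is \eqref{P4}, which \emph{preserves} this shape at every step (no alternation) and uses the row $m_{4i+1,\,i+j}$ of $M$. In particular ``$1/E(q)^3$ dissects alongside'' cannot be taken literally: $1/E(q)^3$ is not a series in $q^3$, so it cannot be carried passively through a $3$-dissection; it must be absorbed into the eta-quotient before applying $H$. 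Second, the mod-$3$ heuristic (``the eta-quotient reduces to a series in $q^3$, so a nonzero-residue extraction gains a $3$'') does not prove the inductive step on its own. It requires knowing that the level-$\beta$ series, divided by $3^{3\alpha+\beta+2}$ and reduced mod $3$, is again a series in $q^3$; that structural fact is precisely what must be maintained through the induction, and it does not follow from the bare divisibility hypothesis. The paper supplies both missing pieces by proving the exact identity
\[
\sum_{n\ge0}p_{3^{2\alpha+1},3}\!\left(3^{2\alpha+\beta+1}n+\tfrac{4\cdot3^{2\alpha+\beta+1}+3^{2\alpha+1}+1}{8}\right)q^n
=\sum_{j\ge1}u^{(2\alpha+1)}_{\beta+1,j}\,q^{j-1}\,\frac{E(q^3)^{12j-3}}{E(q)^{12j+3}},
\]
with $u_{1,j}=x_{2\alpha+1,j}$ and $u_{\mu+1,j}=\sum_i u_{\mu,i}\,m_{4i+1,i+j}$, and then establishing $\pi\bigl(u^{(2\alpha+1)}_{\beta+1,j}\bigr)\ge 3\alpha+\beta+2+\lfloor(9j-10)/2\rfloor+\delta_{j,1}$ by induction from \eqref{pmij}. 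This is exactly the ``sharp minimal valuation for the recursively defined coefficients'' you flag as the main obstacle; without it, neither \eqref{MR15} nor the reduction to the single theta product in Step~3 is justified.
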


\begin{theorem}\label{MT6}
For any integers $n, \alpha, \beta, k \geq 0$ and prime $p$ such that $p\equiv3\pmod{4}$ and $p\nmid n$, we have
\begin{align}
p_{3^{2\alpha+2},3}\left(3^{2\alpha+2\beta+1}n+\frac{2\times3^{2\alpha+2\beta+1}+3^{2\alpha+2}+1}{8}\right )&\equiv 0 \pmod{3^{3\alpha+2\beta+2}},\label{MR17}\\
p_{3^{2\alpha+2},3}\left(3^{2\alpha+2\beta+2}n+\frac{10\times3^{2\alpha+2\beta+1}+3^{2\alpha+2}+1}{8}\right )&\equiv 0 \pmod{3^{3\alpha+2\beta+3}},\label{MR171}\\
p_{3^{2\alpha+2},3}\left(3^{2\alpha+2\beta+3}+\frac{14\times3^{2\alpha+2\beta+2}+3^{2\alpha+2}+1}{8}\right )&\equiv 0 \pmod{3^{3\alpha+2\beta+6}},\label{MR18}\\
p_{3^{2\alpha+2},3}\left(3^{2\alpha+2\beta+3}n+\frac{22\times3^{2\alpha+2\beta+2}+3^{2\alpha+2}+1}{8}\right )&\equiv 0 \pmod{3^{3\alpha+2\beta+7}},\label{MR19}
\end{align}
and
\begin{equation}
p_{3^{2\alpha+2},3}\left(3^{2\alpha+2\beta+1}p^{2k+1}n+\frac{2p^{2k+2}\times3^{2\alpha+2\beta+1}+3^{2\alpha+2}+1}{8}\right )\equiv 0 \pmod{3^{3\alpha+2\beta+4}}.\label{MR20}
\end{equation}
\end{theorem}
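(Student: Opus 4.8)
The goal is to prove Theorem~\ref{MT6}, and the argument runs parallel to the proofs of Theorems~\ref{MT1}--\ref{MT5}: one reduces $p_{3^{2\alpha+2},3}$ at the stated progressions to Hirschhorn's dissections \eqref{H1}--\eqref{H2} of $p_3$, then iterates a short list of $3$-dissections while tracking the power of $3$ gained, all of it organised by the matrix $M$ and the recurrence \eqref{R1}. The first move uses that $\ell=3^{2\alpha+2}$ is divisible by $3^{2\alpha+1}$, so the factor $E(q^{3^{2\alpha+2}})^{-3}$ in \eqref{PD3} is a power series in $q^{3^{2\alpha+1}}$. Writing the $3^{2\alpha+1}$-dissection of $E(q)^{-3}=\sum_{n\ge 0}p_3(n)q^{n}$ as $\sum_{i}q^{i}g_{i}(q^{3^{2\alpha+1}})$ with $g_{i}(q)=\sum_{s\ge 0}p_3(3^{2\alpha+1}s+i)q^{s}$, multiplying by this inert factor gives, for each residue $i$,
\[\sum_{t\ge 0}p_{3^{2\alpha+2},3}\!\left(3^{2\alpha+1}t+i\right)q^{t}=\frac{g_{i}(q)}{E(q^{3})^{3}}.\]
For $i_{0}=\tfrac{5\cdot 3^{2\alpha+1}+1}{8}$, which is exactly the residue in \eqref{MR17} at $\beta=0$, substituting \eqref{H1} turns the right side into $\sum_{j\ge 1}x_{2\alpha+1,j}\,q^{j-1}\,E(q^{3})^{12j-6}/E(q)^{12j}$; the analogous step with modulus $3^{2\alpha+2}$ and \eqref{H2} produces the series needed for the $\beta\ge 1$ branch of \eqref{MR17} and for \eqref{MR171}--\eqref{MR19}.

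For \eqref{MR17} at $\beta=0$ it now suffices that $v_{3}(x_{2\alpha+1,j})\ge 3\alpha+2$ for all $j\ge 1$ (the same bound underlying \eqref{MR1}), and the deeper congruences need the companion bounds for the rows of $M$ feeding them. These would be proved by induction on the row index of $M$: its first column is $(3^{2},2\cdot 3,1,0,\dots)^{T}$, and the relation $m_{i,j}=27m_{i-1,j-1}+9m_{i-2,j-1}+m_{i-3,j-1}$ forces $v_{3}$ to grow down the diagonal, which, propagated through the recursions defining $\mathbf{x}_{k}$ from $x_{1,1}=9$, yields the stated valuations. The delicate point --- and what I expect to be the main obstacle --- is that these valuations are not uniform in $j$: for small $j$ an individual coefficient can be short by a power of $3$, and the missing power is recovered only when the subsequent $3$-dissection extracts the correct sub-progression (for instance the $q^{3t}$-part of a term $q^{j-1}E(q^{3})^{a}/E(q)^{b}$ simplifies through $(1-q^{n})^{9}\equiv(1-q^{3n})^{3}\pmod 9$ and then gains a factor $9$ via $\sum_{s\ge 0}p_3(3s+2)q^{s}=9\,E(q^{3})^{9}/E(q)^{12}$). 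So the bookkeeping is an intertwined induction on $\beta$, not a one-shot estimate.

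For $\beta\ge 1$ and for \eqref{MR171}, \eqref{MR18}, \eqref{MR19} I would prove a lemma giving, in closed form, the $3$-dissection of each series of the shape $\sum_{j}c_{j}q^{j-1}E(q^{3})^{a}/E(q)^{b}$ that occurs, with transition matrix again governed by \eqref{R1}; iterating it the appropriate number of times starting from the series above lands exactly on the progressions of \eqref{MR17}, \eqref{MR171}, \eqref{MR18}, \eqref{MR19}. The distinguished residues in those congruences are precisely the surviving branches of this iterated dissection, and each carries the power of $3$ dictated by its depth in the dissection tree together with the valuations of the previous step; the rapid growth of the diagonal of $M$ (entries $3^{10},3^{14},\dots$) is what pushes the modulus up to $3^{3\alpha+2\beta+6}$ and $3^{3\alpha+2\beta+7}$ on two of the branches.

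For the prime progression \eqref{MR20} I would carry the computation two powers of $3$ further than \eqref{MR17}, obtaining
\[\sum_{n\ge 0}p_{3^{2\alpha+2},3}\!\left(3^{2\alpha+2\beta+1}n+\tfrac{2\cdot 3^{2\alpha+2\beta+1}+3^{2\alpha+2}+1}{8}\right)q^{n}\equiv 3^{3\alpha+2\beta+2}\,\lambda\,\Theta(q)\pmod{3^{3\alpha+2\beta+4}}\]
for some $\lambda\not\equiv 0\pmod 3$ and a theta series $\Theta$; the $E$-products surviving the reduction collapse (via $(1-q^{n})^{3}\equiv 1-q^{3n}$) to a product of two one-dimensional theta series, i.e.\ essentially the theta series of the quadratic form $x^{2}+y^{2}$ --- which is exactly why the hypothesis here is $p\equiv 3\pmod 4$, in contrast to the $(-3/p)=-1$ of Theorem~\ref{MT5}. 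Since $p\equiv 3\pmod 4$ and $p\nmid n$, the norm-form value attached to $3^{2\alpha+2\beta+1}p^{2k+1}n+\tfrac{p^{2k+2}\cdot 3^{2\alpha+2\beta+1}+3^{2\alpha+2}+1}{8}$ is divisible by $p$ to an odd power (the same computation as in the passage to \eqref{MR11}), hence is not represented by $x^{2}+y^{2}$, so that coefficient of $\Theta$ vanishes; this yields \eqref{MR20} for $k=0$, and an induction on $k$ via the $p$-dissection of $\Theta$ --- the mechanism taking \eqref{MR10} to \eqref{MR11} --- extends it to all $k\ge 0$.
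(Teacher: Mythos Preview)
Your outline is correct and follows essentially the same route as the paper: pull out the inert factor $E(q^{3^{2\alpha+2}})^{-3}$, feed the remaining $p_3$-series through Hirschhorn's dissections \eqref{H1}--\eqref{H2}, iterate the $H$-operator with transition entries $m_{i,j}$, and track $3$-adic valuations via \eqref{pmij} and \eqref{C1}; the paper packages these steps as the generating-function identities \eqref{T22}--\eqref{T23} together with the valuation lemma \eqref{C11}--\eqref{C12}. Two small corrections: the extra powers of $3$ in \eqref{MR18} and \eqref{MR19} do not come from ``the rapid growth of the diagonal of $M$'' but rather from the $3$-dissection of $E(q)^{9}$ via \eqref{PP2} (cubing $E(q)^{3}=P(q^{3})-3qE(q^{9})^{3}$ makes the $q^{3n+1}$- and $q^{3n+2}$-parts divisible by $9$ and $27$ respectively), which is precisely what the paper does in \eqref{S1}; and for \eqref{MR20} no induction on $k$ is needed, since once you have the congruence $\equiv v^{(2\alpha+2)}_{2\beta+1,1}E(q)^{6}\pmod{3^{3\alpha+2\beta+4}}$ of \eqref{S3}, the sum-of-two-squares argument (exactly as in the proof of \eqref{MR4}) handles all $k$ simultaneously by observing that $8n+2=2p^{2k+1}(4m+p)$ has $p$ to an odd power whenever $p\nmid m$.
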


\begin{theorem}\label{MT7}
\begin{align}
p_{\pm3^{2\alpha+1}(3^{2\alpha+2}),3}\left(3^{2\alpha+2\beta+1}n+\frac{7\times3^{2\alpha+2\beta+1}-2\times3^{2\alpha+1}+1}{8}\right) &\equiv 0 \pmod{3^{3\alpha+3\beta+2}},  \label{MR21} \\
p_{\pm3^{2\alpha+1}(3^{2\alpha+2}),3}\left(3^{2\alpha+2\beta+2}n+\frac{23\times3^{2\alpha+2\beta+1}-2\times3^{2\alpha+1}+1}{8}\right) &\equiv 0 \pmod{3^{3\alpha+3\beta+4}},  \label{MR22} \\
p_{\pm3^{2\alpha+1}(3^{2\alpha+2}),3}\left(3^{2\alpha+2\beta+2}n+\frac{5\times3^{2\alpha+2\beta+2}-2\times3^{2\alpha+1}+1}{8}\right) &\equiv 0 \pmod{3^{3\alpha+3\beta+3}},  \label{MR23}  
\end{align}
and 
\begin{equation}\label{MR24}
p_{\pm3^{2\alpha+1}(3^{2\alpha+2}),3}\left(3^{2\alpha+2\beta+3}n+\frac{13\times3^{2\alpha+2\beta+2}-2\times3^{2\alpha+1}+1}{8}\right) \equiv 0 \pmod{3^{3\alpha+3\beta+4}}.
\end{equation}
\end{theorem}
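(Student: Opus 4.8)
First I would write $\ell=3^{2\alpha+1}u$ with $\gcd(u,3)=1$; the two sign cases $\ell\equiv\pm3^{2\alpha+1}\pmod{3^{2\alpha+2}}$ amount to $u\equiv1$ and $u\equiv2\pmod3$ and will be handled uniformly, since the argument below uses only $\gcd(u,3)=1$ together with the fact that $p_3(k)\equiv0\pmod3$ whenever $3\nmid k$. From \eqref{D1} and \eqref{PD3},
\[
\sum_{n\ge0}p_{\ell,3}(n)q^n=\frac1{E(q)^3}\cdot\frac1{E(q^{3^{2\alpha+1}u})^3}=\Bigl(\sum_{m\ge0}p_3(m)q^m\Bigr)\Bigl(\sum_{k\ge0}p_3(k)q^{3^{2\alpha+1}uk}\Bigr),
\]
so the second factor is a power series in $q^{3^{2\alpha+1}}$. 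The plan is to push $1/E(q)^3$ through Hirschhorn's iterated $3$-dissection \eqref{H1}--\eqref{H2} while carrying the tail $1/E(q^{3^{2\alpha+1}u})^3$ along inertly: separating residues modulo $3^{2\alpha+1}$ never touches this tail, and after that many dissection steps one is left, on the relevant progression, with an expression of the shape $3^{a}\,E(q^3)^{b}/\bigl(E(q)^{c}E(q^{u})^3\bigr)$, i.e.\ Hirschhorn's situation decorated by the harmless factor $1/E(q^u)^3$. Concretely, for $\alpha=\beta=0$ the $\alpha=0$ instance of \eqref{H1}, namely $\sum_n p_3(3n+2)q^n=9E(q^3)^9/E(q)^{12}$, combined with the fact that $1/E(q^{3u})^3$ is supported on multiples of $3$, gives the base identity
\[
\sum_{n\ge0}p_{\ell,3}(3n+2)\,q^n=9\,\frac{E(q^3)^9}{E(q)^{12}\,E(q^{u})^3},
\]
which is \eqref{MR21} with $\alpha=\beta=0$ and the starting point of the inductions.

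The induction on $\beta$ proceeds by $3$-dissecting $E(q^3)^{12j-3}/\bigl(E(q)^{12j}E(q^u)^3\bigr)$ and $E(q^3)^{12j}/\bigl(E(q)^{12j+3}E(q^u)^3\bigr)$. The contribution of $1/E(q)^{12j}$, respectively $1/E(q)^{12j+3}$, to each residue class is governed by the matrix $M=(m_{i,j})$ via \eqref{R1} and the coefficient vectors $\mathbf{x}_k$ exactly as in \eqref{H1}--\eqref{H2}, while the contribution of $1/E(q^u)^3$ only permutes residues modulo $3$ (as $\gcd(u,3)=1$) and, since $p_3(k)\equiv0\pmod3$ off multiples of $3$, supplies an extra factor of $3$ on every residue class $\not\equiv0\pmod3$. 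Tracking $3$-adic valuations through the $\mathbf x_k$ — using that each diagonal entry $m_{i,i}$ is exactly divisible by $3^{3i-1}$, while $m_{4,2}$ is exactly divisible by $3^2$ and $m_{5,2}$ by $3$ — one checks that two consecutive dissection steps raise the overall power of $3$ by at least $3$, a single step onto a generic residue by $1$, and a single step onto the residue of $p_3(3n+2)$-type by $2$. Together with the base factor $9=3^2$ and the extra factor of $3$ per non-trivial step from the tail, this yields the four moduli $3^{3\alpha+3\beta+2}$, $3^{3\alpha+3\beta+4}$, $3^{3\alpha+3\beta+3}$, $3^{3\alpha+3\beta+4}$ of \eqref{MR21}--\eqref{MR24}, the two larger exponents corresponding to the progressions that hit two low-valuation columns of $M$ in succession. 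The induction on $\alpha$ has the same flavour: replacing $\ell$ by $3^2\ell$ and using $E(q^d)^{3^k}\equiv E(q^{3d})^{3^{k-1}}\pmod{3^k}$ to re-index the tail, a level-$\alpha$ identity modulo $3^{c}$ upgrades to a level-$(\alpha+1)$ identity modulo $3^{c+3}$, accounting for the $3\alpha$ in each exponent and the shift of $2$ in the exponent of $3$ appearing inside the arguments.

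The hard part will be the valuation bookkeeping in the $\beta$-step: one must show that $3$-dissecting the \emph{product} $\bigl(E(q^3)^{b}/E(q)^{c}\bigr)\cdot\bigl(1/E(q^u)^3\bigr)$ behaves, as far as powers of $3$ are concerned, like $3$-dissecting $E(q^3)^{b}/E(q)^{c}$ alone up to a controlled extra gain — i.e.\ that the tail never cancels a factor of $3$ and sometimes supplies one — uniformly in $u$ (hence over the whole residue class $\pm3^{2\alpha+1}\pmod{3^{2\alpha+2}}$) and for both residues of $u$ modulo $3$. The cleanest way to quarantine this is to prove a standalone lemma expressing the three components of the $3$-dissection of $E(q^3)^{b}/\bigl(E(q)^{c}E(q^{u})^3\bigr)$ in terms of the components of $E(q^3)^{b}/E(q)^{c}$ and of $1/E(q^u)^3$ separately, and then iterate it alongside Hirschhorn's recursion; \eqref{MR21}--\eqref{MR24} then follow by specialising $j=1$ at the appropriate stage and reading off the power of $3$. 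As a consistency check and an alternative organisation, the convolution identity $\sum_N p_{\ell,3}(N)q^N=\bigl(\sum_n T_\ell(n)q^n\bigr)\bigl(\sum_m p_3(m)q^{\ell m}\bigr)^2$ explains why these moduli exceed those of Theorem~\ref{MT3} by roughly $2\beta$: the two $p_3$-tails, forced onto $p_3(3\,\cdot\,+2)$-type progressions, each contribute an extra factor of about $3^{\beta}$ on top of the divisibility of $T_\ell$ supplied by \eqref{MR7}.
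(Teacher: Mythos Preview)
Your proposal is a sketch rather than a proof, and it diverges from the paper's approach in a way that creates a genuine gap you have not closed.

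The paper does not carry the tail $1/E(q^u)^3$ as a separate factor. Instead it establishes generating-function identities \eqref{T24} and \eqref{T25} of the shape
\[
\sum_{n\ge0} p_{\pm3^{2\alpha+1}(3^{2\alpha+2}),3}(\cdots)\,q^n
=\sum_{j\ge1} w^{(2\alpha+1)}_{\mu,j}\,q^{j-1}\frac{E(q^3)^{12j}}{E(q)^{12j+3}}
\quad\text{or}\quad
\sum_{j\ge1} w^{(2\alpha+1)}_{\mu,j}\,q^{j-1}\frac{E(q^3)^{12j-3}}{E(q)^{12j}},
\]
with $w_1=x_{2\alpha+1}$ and a two-step recursion: $w_{2\beta+1}\mapsto w_{2\beta+2}$ via $m_{4i+1,i+j}$, then $w_{2\beta+2}\mapsto w_{2\beta+3}$ via $m_{4i,i+j}$. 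From \eqref{pmij} the first step gains one power of $3$ and the second gains two, yielding $\pi(w_{2\beta+1,1})\ge 3\alpha+3\beta+2$ and $\pi(w_{2\beta+2,1})\ge 3\alpha+3\beta+3$ (equations \eqref{C13}--\eqref{C14}). Congruence \eqref{MR21} is then immediate; \eqref{MR22}--\eqref{MR24} follow by truncating to the $j=1$ term, using $E(q)^{9}\equiv E(q^3)^3\pmod3$ to simplify the eta quotient, and $3$-dissecting the leftover $E(q)^3$ or $E(q)^6$ via \eqref{PP2}.

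Your route writes the generating function as $B(q)\cdot G(q)$ with $B=1/E(q^u)^3$ and $G$ Hirschhorn's series, and tries to $3$-dissect this product. The key claim --- that the tail ``supplies an extra factor of $3$ on every residue class $\not\equiv0\pmod3$'' --- is true of $B$ in isolation but does \emph{not} control the valuation of the dissected product. The residue-$r$ component of $BG$ is the convolution $\sum_{s}B_sG_{r-s}$, and the summand $B_0G_r$ carries no extra $3$ from the tail; its valuation is exactly $\pi(G_r)$. So the mechanism you invoke cannot, by itself, produce the $3\beta$ growth (as opposed to the $\beta$ growth in Theorem~\ref{MT5}). In the paper that extra growth comes from the fact that alternating steps hit $m_{4i,i+j}$ rather than $m_{4i+1,i+j}$, and this is encoded in the \emph{eta-quotient exponents} of \eqref{T24}--\eqref{T25}, not in a tail factor. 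Your proposed ``standalone lemma'' would have to reproduce that effect, but with only $\pi(B_t)\ge1$ for $t\not\equiv0$ as input it cannot lift you above $\pi(G_r)$. Separately, the induction on $\alpha$ via $E(q^d)^{3^k}\equiv E(q^{3d})^{3^{k-1}}\pmod{3^k}$ is neither needed nor how the $3\alpha$ arises: it is already built into Hirschhorn's bound \eqref{C1} on $x_{2\alpha+1}$, which is your base case.
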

The remainder of the paper is structured as follows. In Section \ref{SS2}, we introduce the necessary background on the $H$ operator along with some useful lemmas. In Section \ref{SS3}, we provide proofs for Theorems \ref{MT1}-\ref{MT4}, and in Section \ref{SS4}, we prove Theorems \ref{MT5}-\ref{MT7}.

\section{Preliminaries} \label{SS2}
To establish the main results of this paper, we first introduce the necessary notation and lemmas. We first introduce a huffing operator $H$, which is defined as 
\begin{equation*}
H\left(\sum_{n=0}^\infty a_n q^n\right)=\sum_{n=0}^\infty a_{3n} q^{3n}.
\end{equation*}
\begin{lemma}\cite[(2.23)]{H}
Let $\zeta=\frac{E(q)^3}{q E(q^9)^3}$ and $T =\frac{E(q^3)^{12}}{q^3 E(q^9)^{12}}$. Then
\begin{equation}\label{P0}
H\left(\frac{1}{\zeta^i}\right) = \sum_{j\geq1} \frac{m_{i, j}}{T^j}.
\end{equation}
For a positive integer $n$, let $\pi(n)$ be the highest power of $3$ that divides $n$, and define $\pi(0)=+\infty$.
\begin{lemma}[\cite{H} and \cite{T}]
    For each $i,j\geq1 $, we have \begin{equation}\label{pmij} \pi\left(m_{i,j}\right)\geq\left\lfloor\frac{9j-3i-1}{2}\right\rfloor.
    \end{equation}
     \end{lemma}
\begin{proof}
It follows easily from \eqref{R1} and induction.
\end{proof}

\begin{lemma}\cite[(4.6) and (4.7)]{H}
For any integers $\alpha\geq0$ and $j \geq 1$, we have
\begin{equation}\label{C1}
\pi ({x_{2\alpha+1, j}})\geq 3\alpha+2+\left\lfloor\frac{9j-10}{2}\right \rfloor+\delta_{j,1}
\end{equation}
and
\begin{equation}\label{C2}
\pi ({x_{2\alpha+2, j}})\geq 3\alpha+4+\left\lfloor\frac{9j-10}{2}\right \rfloor+\delta_{j,1},
\end{equation}
where \begin{equation*}
   \delta_{j, 1}=\begin{cases}
   1 & \text{if $j=1$},\\
    0 & \text{if $j\neq 1$}.
    \end{cases}
\end{equation*}
\end{lemma}

\end{lemma}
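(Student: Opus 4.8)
The plan is to establish \eqref{C1} and \eqref{C2} simultaneously by induction on $k$, viewing them as the odd case $k=2\alpha+1$ and the even case $k=2\alpha+2$ of the single estimate
\[
\pi(x_{k,j})\ \ge\ e_k+\left\lfloor\frac{9j-10}{2}\right\rfloor+\delta_{j,1}\qquad(j\ge1),
\]
where $e_{2\alpha+1}=3\alpha+2$ and $e_{2\alpha+2}=3\alpha+4$. The base case $k=1$ is read directly off $\mathbf{x}_1=(9,0,0,\dots)$: one has $\pi(x_{1,1})=\pi(9)=2=e_1+\lfloor-\tfrac12\rfloor+1$, while $\pi(x_{1,j})=+\infty$ for $j\ge2$ because $x_{1,j}=0$. (Computing $\mathbf{x}_2$ from the recursion gives a hands-on check of \eqref{C2} with $\alpha=0$, but this is not logically needed.)

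For the inductive step I would assume the estimate for a given $k$ and deduce it for $k+1$. Writing the recursion for $\mathbf{x}_{k+1}$ in the form $x_{k+1,i}=\sum_{j\ge1}x_{k,j}\,m_{R(j),\,i+j}$, with $R(j)=4j$ when $k$ is odd and $R(j)=4j+1$ when $k$ is even — the first index of $m$ being a multiple of the summation index $j$, as it must be for the recursion to arise from \eqref{P0} — the trivial estimates $\pi(a+b)\ge\min\{\pi(a),\pi(b)\}$ and $\pi(ab)=\pi(a)+\pi(b)$ give
\[
\pi(x_{k+1,i})\ \ge\ \min_{j\ge1}\bigl(\pi(x_{k,j})+\pi(m_{R(j),\,i+j})\bigr).
\]
Now \eqref{pmij} yields $\pi(m_{R(j),\,i+j})\ge\lfloor(9(i+j)-3R(j)-1)/2\rfloor=\lfloor(9i-3j-a)/2\rfloor$, with $a=1$ if $k$ is odd and $a=4$ if $k$ is even; also $e_{k+1}-e_k$ equals $2$ or $1$ respectively. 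Substituting the inductive hypothesis, the desired bound for $k+1$ reduces to the purely elementary inequality
\[
\left\lfloor\frac{9j-10}{2}\right\rfloor+\delta_{j,1}+\left\lfloor\frac{9i-3j-a}{2}\right\rfloor\ \ge\ (e_{k+1}-e_k)+\left\lfloor\frac{9i-10}{2}\right\rfloor+\delta_{i,1}
\]
for all $i,j\ge1$, where $(a,\,e_{k+1}-e_k)=(1,2)$ when $k$ is odd and $(4,1)$ when $k$ is even.

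To finish, I would verify this inequality directly. For fixed $i$ its left-hand side grows like $3j+O(1)$ as $j$ increases, so past a small threshold it is slack by a wide margin and only $j\in\{1,2,3\}$ require attention; the genuinely tight case is $j=1$, where the bonus $\delta_{j,1}=1$ is exactly what saves it. Running through these few values of $j$ while separating into the parities of $i$ (and of $j$) makes every floor explicit and reduces each case to a line or two of arithmetic.

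The only real obstacle is the bookkeeping. Both \eqref{pmij} and the target bounds are attained with equality at several boundary configurations (for instance $i=j=1$ in the odd step, and $j=1$ with $i$ even in the even step), so no slack can be wasted, and the two $\delta$-corrections — which record the anomalies $m_{3,1}=1$ and $x_{1,1}=9$ rather than higher powers of $3$ — must be propagated with care. There is nothing conceptually harder than that.
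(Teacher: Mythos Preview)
Your argument is correct and is the standard route: induction on $k$, bounding $\pi(x_{k+1,i})$ via $\min_{j}\bigl(\pi(x_{k,j})+\pi(m_{\cdot,\cdot})\bigr)$ together with \eqref{pmij}, then checking the residual floor inequality by hand at the small values of $j$. The paper does not supply its own proof of \eqref{C1}--\eqref{C2} but simply cites \cite{H}; your method, however, is exactly the one the paper itself uses for the parallel estimates on $r^{(2\alpha+1)}_{\mu,j}$ and $s^{(2\alpha+2)}_{\mu,j}$ (see the proofs following \eqref{C3}--\eqref{C5}), so there is no divergence of approach.

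One point worth flagging: you have written the recursion as $x_{k+1,i}=\sum_{j\ge1}x_{k,j}\,m_{4j,\,i+j}$ (for $k$ odd), with the first subscript of $m$ attached to the summation variable $j$. This is the correct form and the one your argument requires; the version printed in the paper's introduction has the roles of $i$ and $j$ transposed inside the $m$-subscript (i.e.\ $m_{4i,\,i+j}$), which is a typo --- with that reading one would compute $x_{2,2}=9\,m_{8,3}=216=2^{3}\cdot 3^{3}$, contradicting \eqref{C2} at $\alpha=0$. Your parenthetical appeal to \eqref{P0} to pin down the correct indexing is on the mark, and is also consistent with every other recursion in the paper (for $r$, $s$, $y$, $z$, $u$, $v$, $w$), all of which put the running index in the first slot of $m$.
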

\begin{lemma}
For all $i\geq 1$, we have
\begin{align}
    H\left(q^{i-3}\frac{E(q^3)^{12i}}{E(q)^{12i}}\right)&=\sum_{j\geq 1}m_{4i,i+j} \, q^{3j-3} \frac{E(q^9)^{12j}}{E(q^3)^{12j}},\label{P3} \\
    H\left(q^{i-2}\frac{E(q^3)^{12i-3}}{E(q)^{12i+3}}\right)&=\sum_{j\geq 1}m_{4i+1,i+j} \, q^{3j-3}\frac{E(q^9)^{12j-3}}{E(q^3)^{12j+3}},\label{P4} \\
    H\left(q^{i-3}\frac{E(q^3)^{12i-9}}{E(q)^{12i-9}}\right)&=\sum_{j\geq 1}m_{4i-3,i+j-1}\, q^{3j-3}\frac{E(q^9)^{12j-3}}{E(q^3)^{12j-3}},\label{P1} \\
    H\left(q^{i-1}\frac{E(q^3)^{12i-3}}{E(q)^{12i-3}}\right)&=\sum_{j\geq 1}m_{4i-1,i+j-1}\, q^{3j-3}\frac{E(q^9)^{12j-9}}{E(q^3)^{12j-9}},\label{P2} 
\end{align}
and
\begin{equation}\label{P5}
    H\left(q^{i-1}\frac{E(q^3)^{12i}}{E(q)^{12i+6}}\right)=\sum_{j\geq 1}m_{4i+2,i+j}\, q^{3j-3}\frac{E(q^9)^{12j-6}}{E(q^3)^{12j}}.
\end{equation}
\end{lemma}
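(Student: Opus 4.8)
The plan is to deduce all five identities uniformly from Hirschhorn's identity \eqref{P0}, using one elementary fact about the operator $H$: it is linear over the ring of formal Laurent series in $q^3$, i.e. $H(A(q^3)B(q))=A(q^3)H(B(q))$ whenever $A(q^3)$ is a Laurent series in $q^3$ with finitely many negative terms and $B(q)$ is a Laurent series in $q$. This is immediate from the definition, since the monomial $q^{3k}q^{\ell}$ has exponent divisible by $3$ precisely when $3\mid\ell$.

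Fix $i\ge 1$ and recall $\zeta^{-1}=qE(q^9)^3/E(q)^3$ and $T=E(q^3)^{12}/(q^3E(q^9)^{12})$, so that $\zeta^{-e}=q^eE(q^9)^{3e}/E(q)^{3e}$ and $T^{-j}=q^{3j}E(q^9)^{12j}/E(q^3)^{12j}$. The first step is to rewrite each left-hand side as $R\cdot\zeta^{-e}$, where $e$ denotes the exponent $4i$, $4i+1$, $4i-3$, $4i-1$, $4i+2$ for \eqref{P3}, \eqref{P4}, \eqref{P1}, \eqref{P2}, \eqref{P5} respectively, and $R$ is an explicit Laurent series in $q^3$. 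For example, clearing $E(q)$ from \eqref{P3} with the help of $\zeta^{-4i}$ gives $q^{i-3}E(q^3)^{12i}/E(q)^{12i}=(q^{-3i-3}E(q^3)^{12i}/E(q^9)^{12i})\,\zeta^{-4i}$, and the bracketed factor equals $T^i/q^3$; the same manoeuvre applied to \eqref{P4}, \eqref{P1}, \eqref{P2}, \eqref{P5} produces, in place of $R$, the Laurent series $T^i/(q^3E(q^3)^3E(q^9)^3)$, $T^{i-1}E(q^3)^3/(q^3E(q^9)^3)$, $T^{i-1}E(q^3)^9/(q^3E(q^9)^9)$ and $T^i/(q^3E(q^9)^6)$, respectively.

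Now applying $q^3$-linearity of $H$ and then \eqref{P0} with exponent $e$ yields $H(R\,\zeta^{-e})=R\sum_{j\ge1}m_{e,j}T^{-j}$; substituting $T^{-j}=q^{3j}E(q^9)^{12j}/E(q^3)^{12j}$ and collecting the powers of $q$ turns the $j$th summand into $m_{e,j}$ times a power of $q^3$ times an eta-quotient of exactly the shape appearing on the right-hand side. After reindexing $j\mapsto i+j$ in \eqref{P3}, \eqref{P4}, \eqref{P5} and $j\mapsto i+j-1$ in \eqref{P1}, \eqref{P2}, the two sides agree term by term, provided the contributions with small $j$ drop out. That requirement is exactly $m_{e,j}=0$ whenever $3j<e$, which I would prove by induction on $j$: the case $j=1$ is the stated fact $m_{i,1}=0$ for $i>3$, and if $j\ge 2$ and $i>3j$, then $i-1,i-2,i-3$ all exceed $3(j-1)$, so the three terms on the right of \eqref{R1} vanish by the inductive hypothesis. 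Since $3i<4i$ and $3(i-1)<4i-3$ for every $i\ge1$, this vanishing removes precisely the indices that do not occur on the right-hand sides, and the identities follow.

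The argument is essentially bookkeeping once \eqref{P0} and the $q^3$-linearity of $H$ are in hand; the only new ingredient is the vanishing $m_{e,j}=0$ for $3j<e$, which is the two-line induction above. I anticipate no serious obstacle, only the need to track carefully the exponents of $q$, $E(q^3)$ and $E(q^9)$ while passing between the $(\zeta,T)$-form and the eta-quotient form.
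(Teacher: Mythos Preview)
Your proposal is correct and follows essentially the same route as the paper: both arguments factor the left-hand side as a Laurent series in $q^3$ times $\zeta^{-e}$, pull the $q^3$-factor through $H$, invoke \eqref{P0}, and then reindex using the vanishing $m_{e,j}=0$ for $3j<e$. The paper states this vanishing as ``the fact that $m_{4i,j}=0$ for $1\le j\le i$'' without justification, whereas you supply the short induction via \eqref{R1}; otherwise the two proofs coincide.
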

\begin{proof}
We can rewrite \eqref{P0} as
\begin{equation}\label{P00}
H\left(q^{i}\frac{E(q^9)^{3i}}{E(q)^{3i}}\right)=\sum_{j= 1}^\infty m_{i,j} \, q^{3j} \frac{E(q^9)^{12j}}{E(q^3)^{12j}}.
\end{equation}
From \eqref{P00} and the fact that $m_{4i, j}=0$ for $1\leq j \leq i$, we see that
\begin{align*}
        H\left(q^{4i}\frac{E(q^9)^{12i}}{E(q)^{12i}}\right)&=\sum_{j=i+1}^\infty m_{4i,j} \, q^{3j} \frac{E(q^9)^{12j}}{E(q^3)^{12j}}\\
        &=\sum_{j=1}^\infty m_{4i,j+i} \, q^{3j+3i} \frac{E(q^9)^{12j+12i}}{E(q^3)^{12j+12i}}
\end{align*}
which yields \eqref{P3}. Similarly, we can prove the identities \eqref{P4}--\eqref{P5}.
\end{proof}
\section{Congruences modulo powers of 3 for $T_{3^{\ell}}(n)$}\label{SS3}
In this section, we derive generating functions for $T_{3^{\ell}}(n)$, which we use to prove Theorems \ref{MT1}-\ref{MT4}. 
\begin{theorem} For each $\alpha \geq0$ and $\beta \geq 0$, we have
\begin{align}
\sum_{n\ge0}T_{3^{2\alpha+1}}\left(3^{2\alpha+2\beta+1}n+\frac{2\cdot3^{2\alpha+2\beta+2}-3^{2\alpha+1}+1}{8}\right )q^n
=\sum_{j\geq 1}r^{(2\alpha+1)}_{2\beta+1,j}q^{j-1}\frac{E(q^3)^{12j-3}}{E(q)^{12j-3}},\label{T11}
\end{align}
and
\begin{align}
\sum_{n\ge0}T_{3^{2\alpha+1}}\left(3^{2\alpha+2\beta+2}n+\frac{2\cdot3^{2\alpha+2\beta+2}-3^{2\alpha+1}+1}{8}\right )q^{n}=\sum_{j\geq 1}r^{(2\alpha+1)}_{2\beta+2,j}q^{j-1}\frac{E(q^3)^{12j-9}}{E(q)^{12j-9}}.\label{D6}
\end{align}
where the coefficient vectors are defined as follows:
\begin{equation}\label{T111}
        r_{1,j}^{(2\alpha+1)}=x_{2\alpha+1,j}
\end{equation}
and
\begin{equation}\label{T112}
       r^{(2\alpha+1)}_{\mu+1,j}= 
       \begin{cases}
      \displaystyle \sum_{i\geq1}r^{(2\alpha+1)}_{\mu,i} m_{4i-1,i+j-1} \,\,\   & \text{if  $\mu$ is odd},\\
       \displaystyle \sum_{i\geq1}r^{(2\alpha+1)}_{\mu,i}m_{4i-3,i+j-1} \,\,\   & \text{if  $\mu$ is even},
       \end{cases}
\end{equation}
for all $\mu, j\geq 1$.
\end{theorem}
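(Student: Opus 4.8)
The plan is to induct on $\beta$, taking \eqref{H1} as the base case and shuttling between the two shapes $\dfrac{E(q^3)^{12j-3}}{E(q)^{12j-3}}$ and $\dfrac{E(q^3)^{12j-9}}{E(q)^{12j-9}}$ by means of the huffing operator $H$ together with the dissection identities \eqref{P1} and \eqref{P2}. Throughout, all manipulations take place in the ring of formal Laurent series in $q$, so that applying $H$ term by term and interchanging the order of summation are legitimate.

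For the base case $\beta=0$, observe that $2\cdot 3^{2\alpha+2}-3^{2\alpha+1}=5\cdot 3^{2\alpha+1}$, so the progression in \eqref{T11} is $3^{2\alpha+1}n+\tfrac{5\cdot 3^{2\alpha+1}+1}{8}$, and one checks that $\tfrac{5\cdot 3^{2\alpha+1}+1}{8}$ is a nonnegative integer strictly less than $3^{2\alpha+1}$. From \eqref{D2} with $\ell=3^{2\alpha+1}$ and \eqref{D1} we have $\sum_{n\ge0}T_{3^{2\alpha+1}}(n)q^n=E(q^{3^{2\alpha+1}})^3\sum_{n\ge0}p_3(n)q^n$, and since $E(q^{3^{2\alpha+1}})^3$ is supported on exponents divisible by $3^{2\alpha+1}$, extracting the residue class $\tfrac{5\cdot 3^{2\alpha+1}+1}{8}$ modulo $3^{2\alpha+1}$ yields
\[
\sum_{n\ge0}T_{3^{2\alpha+1}}\!\left(3^{2\alpha+1}n+\tfrac{5\cdot 3^{2\alpha+1}+1}{8}\right)q^n
=E(q)^3\sum_{n\ge0}p_3\!\left(3^{2\alpha+1}n+\tfrac{5\cdot 3^{2\alpha+1}+1}{8}\right)q^n .
\]
Substituting \eqref{H1} and absorbing $E(q)^3$ into the denominator turns $\dfrac{E(q^3)^{12j-3}}{E(q)^{12j}}$ into $\dfrac{E(q^3)^{12j-3}}{E(q)^{12j-3}}$; this is precisely \eqref{T11} at $\beta=0$ with $r^{(2\alpha+1)}_{1,j}=x_{2\alpha+1,j}$, as in \eqref{T111}.

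Assume now that \eqref{T11} holds for a given $\beta\ge0$. Applying $H$ to both sides and then replacing $q^3$ by $q$, the left side becomes the generating function over the sub-progression $3^{2\alpha+2\beta+2}n+\tfrac{2\cdot3^{2\alpha+2\beta+2}-3^{2\alpha+1}+1}{8}$ (the shift constant is unchanged since $3^{2\alpha+2\beta+2}n=3^{2\alpha+2\beta+1}(3n)$), while on the right side \eqref{P2} applies to each term $q^{i-1}\dfrac{E(q^3)^{12i-3}}{E(q)^{12i-3}}$; after interchanging the two summations the inner coefficient $\sum_{i\ge1}r^{(2\alpha+1)}_{2\beta+1,i}\,m_{4i-1,i+j-1}$ is exactly $r^{(2\alpha+1)}_{2\beta+2,j}$ by the odd case of \eqref{T112}, giving \eqref{D6}. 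To pass from \eqref{D6} to \eqref{T11} with $\beta$ replaced by $\beta+1$, I would instead multiply \eqref{D6} by $q$ before applying $H$: this isolates the terms of index congruent to $2$ modulo $3$, and since $2\cdot 3^{2\alpha+2\beta+2}+\tfrac{2\cdot3^{2\alpha+2\beta+2}-3^{2\alpha+1}+1}{8}=\tfrac{2\cdot3^{2\alpha+2\beta+4}-3^{2\alpha+1}+1}{8}$, dividing by $q^3$ and replacing $q^3$ by $q$ produces the progression $3^{2\alpha+2\beta+3}n+\tfrac{2\cdot3^{2\alpha+2\beta+4}-3^{2\alpha+1}+1}{8}$ on the left. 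On the right, $q\cdot q^{i-1}\dfrac{E(q^3)^{12i-9}}{E(q)^{12i-9}}=q^3\cdot q^{i-3}\dfrac{E(q^3)^{12i-9}}{E(q)^{12i-9}}$, so $H$ pulls out the factor $q^3$ and \eqref{P1} applies; the resulting inner coefficient $\sum_{i\ge1}r^{(2\alpha+1)}_{2\beta+2,i}\,m_{4i-3,i+j-1}$ equals $r^{(2\alpha+1)}_{2\beta+3,j}$ by the even case of \eqref{T112}. This closes the induction.

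There is no genuine conceptual obstacle: the argument is simply an iteration of Hirschhorn's dissection machinery, transported from $p_3$ to $T_{3^{2\alpha+1}}$ through the factorization of \eqref{D2}. The delicate points are purely computational — verifying that $\tfrac{2\cdot3^{2\alpha+2\beta+2}-3^{2\alpha+1}+1}{8}$ is an integer lying in the correct residue range for the base-case extraction, correctly tracking how the shift constant changes (unchanged when huffing, augmented by $2\cdot 3^{2\alpha+2\beta+2}$ when extracting the residue $2$), and matching the parity of the index against the correct dissection lemma (\eqref{P2} for the odd-to-even step, \eqref{P1} for the even-to-odd step) and the correct branch of the recurrence \eqref{T112}.
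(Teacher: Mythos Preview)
Your proposal is correct and follows essentially the same route as the paper: establish the $\beta=0$ case of \eqref{T11} by factoring the generating function through \eqref{H1}, then alternate between \eqref{T11} and \eqref{D6} by applying $H$ and invoking \eqref{P2} and \eqref{P1} respectively, matching the resulting inner sums to the two branches of \eqref{T112}. The only cosmetic difference is that for the even-to-odd step the paper works with the shifted form $q^{n-2}$ (equivalently $q^{j-3}$) so that $H$ directly picks out the correct residue class, whereas you multiply by $q$ and then divide by $q^3$; these are the same manipulation.
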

\begin{proof}
From \eqref{D2}, we have
\begin{equation}\label{D3}
\sum_{n\geq 0}T_{3^{2\alpha+1}}(n)q^n=E\left(q^{3^{2\alpha+1}}\right)^3\sum_{n\geq 0}p_{3}(n)q^n.
\end{equation}
Extracting the terms involving $q^{3^{2\alpha+1}n+\frac{5\times3^{2\alpha+1}+1}{8}}$ on both sides of \eqref{D3} and dividing throughout by $q^{\frac{5\times3^{2\alpha+1}+1}{8}}$, we obtain
\begin{align}
\sum_{n\geq 0}T_{3^{2\alpha+1}}\left(3^{2\alpha+1}n+\frac{5\times3^{2\alpha+1}+1}{8}\right)q^{3^{2\alpha+1}n}=E\left(q^{3^{2\alpha+1}}\right)^3\sum_{n\geq 0}p_{3}\left(3^{2\alpha+1}n+\frac{5\times3^{2\alpha+1}+1}{8}\right)q^{3^{2\alpha+1}n}.\label{D4}
\end{align}
If we replace $q^{3^{2\alpha+1}}$ by $q$ and  use \eqref{H1}, we get
\begin{equation}\label{D5}
\sum_{n\geq 0}T_{3^{2\alpha+1}}\left(3^{2\alpha+1}n+\frac{5\times3^{2\alpha+1}+1}{8}\right)q^n=\sum_{j\geq 1}x_{2\alpha+1,j}q^{j-1}\frac{E(q^3)^{12j-3}}{E(q)^{12j-3}},
\end{equation}
which is the case $\beta=0$ of \eqref{T11}.

We now assume that \eqref{T11} is true for some integer $\beta\geq 0$. Applying the operator $H$ to both sides, by \eqref{P2}, we have

\begin{align*}
&\sum_{n\ge0}T_{3^{2\alpha+1}}\left(3^{2\alpha+2\beta+2}n+\frac{2\cdot3^{2\alpha+2\beta+2}-3^{2\alpha+1}+1}{8}\right )q^{3n}\\&
=\sum_{i\geq 1}r^{(2\alpha+1)}_{2\beta+1,i}\sum_{j\geq 1}m_{4i-1,i+j-1}q^{3j-3}\left(\frac{E(q^9)^3}{E(q^3)^3}\right)^{4j-3}\\&
=\sum_{j\geq 1}\left(\sum_{i\geq 1}r^{(2\alpha+1)}_{2\beta+1,i}m_{4i-1,i+j-1}\right)q^{3j-3}\left(\frac{E(q^9)^3}{E(q^3)^3}\right)^{4j-3}\\&
=\sum_{j\geq 1}r^{(2\alpha+1)}_{2\beta+2,j}q^{3j-3}\left(\frac{E(q^9)^3}{E(q^3)^3}\right)^{4j-3}.
\end{align*}
If we now replace $q^3$ by $q$, we obtain
\begin{align*}
\sum_{n\ge0}T_{3^{2\alpha+1}}\left(3^{2\alpha+2\beta+2}n+\frac{2\cdot3^{2\alpha+2\beta+2}-3^{2\alpha+1}+1}{8}\right )q^{n-2}=\sum_{j\geq 1}r^{(2\alpha+1)}_{2\beta+2,j}q^{j-3}\left(\frac{E(q^3)^3}{E(q)^3}\right)^{4j-3},
\end{align*}
which is \eqref{D6}. 

Now suppose \eqref{D6} holds. If we operate $H$ on both sides of the equation, by \eqref{P1}, we have
\begin{align*}
&\sum_{n\ge0}T_{3^{2\alpha+1}}\left(3^{2\alpha+2\beta+3}n+\frac{2\cdot3^{2\alpha+2\beta+4}-3^{2\alpha+1}+1}{8}\right )q^{3n}\\&
=\sum_{j\geq 1}r^{(2\alpha+1)}_{2\beta+2,j}\sum_{j\geq 1}m_{4i-3,i+j-1}q^{3j-3}\left(\frac{E(q^9)^3}{E(q^3)^3}\right)^{4j-1}\\&
=\sum_{j\geq 1}\left(\sum_{i\geq 1}r^{(2\alpha+1)}_{2\beta+2,i}m_{4i-3,i+j-1}\right)q^{3j-3}\left(\frac{E(q^9)^3}{E(q^3)^3}\right)^{4j-1}\\&
=\sum_{j\geq 1}r^{(2\alpha+1)}_{2\beta+3,j}q^{3j-3}\left(\frac{E(q^9)^3}{E(q^3)^3}\right)^{4j-1}.
\end{align*}
That is,
\begin{align*}
\sum_{n\ge0}T_{3^{2\alpha+1}}\left(3^{2\alpha+2\beta+3}n+\frac{2\cdot3^{2\alpha+2\beta+4}-3^{2\alpha+1}+1}{8}\right )q^n
=\sum_{j\geq 1}r^{(2\alpha+1)}_{2\beta+3,i}q^{j-1}\left(\frac{E(q^3)^3}{E(q)^3}\right)^{4j-1}.
\end{align*}
So we obtain \eqref{T11} with $\beta$ replaced by $\beta+1$.
\end{proof}

\begin{lemma}
For any integers $\alpha, \beta\geq0$ and $j \geq 1$, we have
\begin{equation}\label{C3}
\pi ({r_{2\beta+1, j}^{(2\alpha+1)}})\geq 3\alpha+2\beta+2+\left\lfloor\frac{9j-10}{2}\right \rfloor+\delta_{j,1}
\end{equation}
and
\begin{equation}\label{C4}
\pi ({r_{2\beta+2, j}^{(2\alpha+1)}})\geq 3\alpha+2\beta+2+\left\lfloor\frac{9j-10}{2}\right \rfloor+\delta_{j,1}.
\end{equation}
\end{lemma}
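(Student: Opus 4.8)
The plan is to prove the two bounds \eqref{C3} and \eqref{C4} simultaneously by induction on $\beta$, mirroring the recursive structure \eqref{T111}--\eqref{T112} that defines the vectors $r^{(2\alpha+1)}_{\mu,j}$. The base case $\beta=0$ for \eqref{C3} is immediate: by \eqref{T111} we have $r^{(2\alpha+1)}_{1,j}=x_{2\alpha+1,j}$, so \eqref{C3} with $\beta=0$ is exactly \eqref{C1}. For the inductive engine I would establish one step at a time: first show that \eqref{C3} for a given $\beta$ implies \eqref{C4} for the same $\beta$ (this uses the even-index branch $r^{(2\alpha+1)}_{2\beta+2,j}=\sum_{i\geq1}r^{(2\alpha+1)}_{2\beta+1,i}\,m_{4i-1,i+j-1}$), and then show that \eqref{C4} for a given $\beta$ implies \eqref{C3} for $\beta+1$ (this uses $r^{(2\alpha+1)}_{2\beta+3,j}=\sum_{i\geq1}r^{(2\alpha+1)}_{2\beta+2,i}\,m_{4i-3,i+j-1}$). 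Each implication is a term-by-term estimate on a convolution: $\pi$ of a sum is at least the minimum of $\pi$ of the summands, so it suffices to bound $\pi(r^{(2\alpha+1)}_{\mu,i})+\pi(m_{*,i+j-1})$ below by the target for every $i\geq1$.

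The key computation, then, is to feed the inductive hypothesis together with the valuation bound \eqref{pmij} for the matrix entries into these convolutions. For the step \eqref{C3}$\Rightarrow$\eqref{C4}, for each $i\geq1$ I would write
\[
\pi\bigl(r^{(2\alpha+1)}_{2\beta+1,i}\,m_{4i-1,i+j-1}\bigr)\;\geq\;\Bigl(3\alpha+2\beta+2+\Bigl\lfloor\tfrac{9i-10}{2}\Bigr\rfloor+\delta_{i,1}\Bigr)+\Bigl\lfloor\tfrac{9(i+j-1)-3(4i-1)-1}{2}\Bigr\rfloor,
\]
using \eqref{C3} for the first factor and \eqref{pmij} (with row index $4i-1$, column index $i+j-1$) for the second. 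The exponent in the second floor simplifies to $9j-3i-5$, so the two floor terms combine and one checks that the sum is minimized over $i\geq1$ at $i=1$, where it collapses to $3\alpha+2\beta+2+\lfloor\frac{9j-10}{2}\rfloor+\delta_{j,1}$, exactly \eqref{C4}. The step \eqref{C4}$\Rightarrow$\eqref{C3} for $\beta+1$ is the same kind of arithmetic with the matrix entry $m_{4i-3,i+j-1}$, whose valuation bound from \eqref{pmij} is $\lfloor\frac{9(i+j-1)-3(4i-3)-1}{2}\rfloor=\lfloor\frac{9j-3i+1}{2}\rfloor$; adding the hypothesis $3\alpha+2\beta+2+\lfloor\frac{9i-10}{2}\rfloor+\delta_{i,1}$ and minimizing over $i$ again pins the bottleneck at $i=1$ and yields $3\alpha+2(\beta+1)+2+\lfloor\frac{9j-10}{2}\rfloor+\delta_{j,1}$, which is \eqref{C3} with $\beta$ advanced. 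A small amount of care is needed with the $\delta_{i,1}$ term and with floor-function parity when $i\geq2$, to confirm that no intermediate $i$ beats $i=1$; this is where I expect most of the bookkeeping to lie, though it is entirely routine.

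The main obstacle is therefore not conceptual but the verification that the $i=1$ term genuinely dominates: one must show, for all $j\geq1$ and all $i\geq2$, that
\[
3\alpha+2\beta+2+\Bigl\lfloor\tfrac{9i-10}{2}\Bigr\rfloor+(9j-3i-5)\;\geq\;3\alpha+2\beta+2+\Bigl\lfloor\tfrac{9j-10}{2}\Bigr\rfloor+\delta_{j,1}
\]
(and the analogous inequality for the other step), i.e.\ that $\lfloor\frac{9i-10}{2}\rfloor-3i$ is nondecreasing in $i$ and already nonnegative-enough at $i=2$ to absorb the $\delta_{j,1}$ loss at $j=1$. This reduces to checking a handful of small cases plus a monotonicity statement, after which the induction closes. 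I would present the argument as: (i) base case via \eqref{C1}; (ii) Lemma: \eqref{C3}$_\beta\Rightarrow$\eqref{C4}$_\beta$; (iii) Lemma: \eqref{C4}$_\beta\Rightarrow$\eqref{C3}$_{\beta+1}$; (iv) conclude. One could alternatively replace (ii)--(iii) by a direct proof of an auxiliary bound on $r^{(2\alpha+1)}_{\mu,j}$ valid for all $\mu\geq1$ by a single induction on $\mu$, but the two-parity split matches the paper's conventions and keeps the floor arithmetic transparent.
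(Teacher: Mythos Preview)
Your plan coincides with the paper's proof: same base case via \eqref{T111} and \eqref{C1}, same two-step induction \eqref{C3}$_\beta\Rightarrow$\eqref{C4}$_\beta\Rightarrow$\eqref{C3}$_{\beta+1}$ by bounding the convolutions \eqref{T112} termwise with \eqref{pmij} and identifying $i=1$ as the minimizer. Watch two arithmetic slips---the numerators inside the second floors should read $9j-3i-7$ and $9j-3i-1$ respectively---and note that at $i=j=1$ in the first step the bound \eqref{pmij} alone gives only $\pi(m_{3,1})\ge-1$, one short of what is needed, so (as the paper tacitly does) use the explicit entry $m_{3,1}=1$ there.
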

\begin{proof}
By \eqref{T111} and \eqref{C1}, \eqref{C3} is true for $\beta=0$.
Now suppose \eqref{C3} is true for some $\beta\geq0$. Then
\begin{align*}
\pi ({r_{2\beta+2, j}^{(2\alpha+1)}})&\geq \min_{i\geq1} \left(\pi ({r_{2\beta+1, i}^{(2\alpha+1)}})+\pi(m_{4i-1, i+j-1})\right)\\&
= \pi(r_{2\beta+1, 1}^{(2\alpha+1)}) + \pi\left( m(3, j)\right)\\&
\geq 3\alpha+2\beta+2 + \left\lfloor\frac{9j-10}{2}\right \rfloor + \delta_{j,1},
\end{align*}
which is \eqref{C4}. 

Now suppose \eqref{C4} is true. Then
\begin{align*}
\pi ({r_{2\beta+3, j}^{(2\alpha+1)}})&\geq \min_{i\geq1} \left(\pi ({r_{2\beta+2, i}^{(2\alpha+1)}})+\pi(m_{4i-3, i+j-1})\right)\\&
\geq 3\alpha+2\beta+2 + \left\lfloor\frac{9j-4}{2}\right \rfloor\\&
\geq 3\alpha+2\beta+4 + \left\lfloor\frac{9j-10}{2}\right \rfloor + \delta_{j,1},
\end{align*}
which is \eqref{C3} with $\beta+1$ for $\beta$. This completes the proof by induction.
\end{proof}

\begin{proof}[Proof of Theorem \ref{MT1}]
From \eqref{T11} and \eqref{C3}, we have
\begin{align}
\sum_{n\ge0}T_{3^{2\alpha+1}}\left(3^{2\alpha+2\beta+1}n+\frac{2\cdot3^{2\alpha+2\beta+2}-3^{2\alpha+1}+1}{8}\right )q^n
\equiv r^{(2\alpha+1)}_{2\beta+1,1}\frac{E(q^3)^9}{E(q)^9}    \pmod{3^{3\alpha+2\beta+6}}.\label{PP1}
\end{align}
This yields the family of congruences \eqref{MR1}. 

Due to Jacobi, we have
\begin{equation}\label{PP7}
    E(q)^3 = \sum_{n=0}^\infty (-1)^n (2n+1) q^{n(n+1)/2}. 
\end{equation}
Which is equivalent to
\begin{equation}\label{PP2}
    E(q)^3 = P(q^3) - 3q E(q^9)^3,
\end{equation}
where \[P(q) = \sum_{n=-\infty}^\infty (-1)^n (6n+1) q^{n(3n+1)/2}. \]
By the binomial theorem, it is easy that
\[\frac{E(q^3)^9}{E(q)^9} \equiv E(q)^{18} \pmod{3^3}\]
and by \eqref{PP2}
\[E(q)^{18} \equiv P(q^3)^6-18q P(q^3)^5E(q^9)^3  \pmod{3^3}.\]
With the aid of above, \eqref{PP1} reduces to
\begin{align}
\nonumber&\sum_{n\ge0}T_{3^{2\alpha+1}}\left(3^{2\alpha+2\beta+1}n+\frac{2\cdot3^{2\alpha+2\beta+2}-3^{2\alpha+1}+1}{8}\right )q^n\\&
\equiv r^{(2\alpha+1)}_{2\beta+1,1}(P(q^3)^6-18q P(q^3)^5E(q^9)^3 )    \pmod{3^{3\alpha+2\beta+5}}.\label{PP3}
\end{align}
If we extract the terms involving $q^{3n+1}$ and  $q^{3n+2}$ from both sides of \eqref{PP3}, we arrive at \eqref{MR2} and \eqref{MR3}, respectively.

By using the fact, \[\frac{E(q^3)^9}{E(q)^9} \equiv E(q^3)^{6} \pmod{3^2}\] in \eqref{PP1} and extracting the terms involving $q^{3n}$ from both sides, we find that
\begin{align*}
&\sum_{n\ge0}T_{3^{2\alpha+1}}\left(3^{2\alpha+2\beta+2}n+\frac{2\cdot3^{2\alpha+2\beta+2}-3^{2\alpha+1}+1}{8}\right )q^n\\&
\equiv r^{(2\alpha+1)}_{2\beta+1,1} E(q)^6 \\&
\equiv r^{(2\alpha+1)}_{2\beta+1,1} \sum_{k=0}^\infty \sum_{l=0}^\infty (-1)^{k+l} (2k+1)(2l+1) q^{k(k+1)/2+l(l+1)/2} \pmod{3^{3\alpha+2\beta+4}}
\end{align*}
which implies that
\begin{align*}
&\sum_{n\ge0}T_{3^{2\alpha+1}}\left(3^{2\alpha+2\beta+2}n+\frac{2\cdot3^{2\alpha+2\beta+2}-3^{2\alpha+1}+1}{8}\right )q^{8n+2}\\&
\equiv r^{(2\alpha+1)}_{2\beta+1,1} \sum_{k=0}^\infty \sum_{l=0}^\infty (-1)^{k+l} (2k+1)(2l+1) q^{(2k+1)^2 +(2l+1)^2} \pmod{3^{3\alpha+2\beta+4}}
\end{align*}
Thus, $T_{3^{2\alpha+1}}\left(3^{2\alpha+2\beta+2}n+\frac{2\cdot3^{2\alpha+2\beta+2}-3^{2\alpha+1}+1}{8}\right ) \equiv 0 \pmod{3^{3\alpha+2\beta+4}}$ if $8n+2$ is not a sum of two squares.
We recall that an integer $N>0$ cannot be written as a sum of two squares if and only if each prime factor $p \equiv 3 \pmod{4}$ has an odd power in the prime decomposition of $N$. However, we have 
\[n=p^{2k+1} m +\frac{p^{2k+2}-1}{4}\]
which yields $8n+2 = 8 p^{2k+1}m+2p^{2k+2} = 2p^{2k+1}(4m+p).$ If $p\nmid m$, then $8n+2$ is not a sum of two squares. This completes the proof of \eqref{MR4}.
\end{proof}

\begin{theorem} For each $\alpha, \beta \geq 0$, we have
\begin{equation}\label{T12}
\sum_{n\ge0}T_{3^{2\alpha+2}}\left(3^{2\alpha+\beta+1}(n+1)-\frac{3^{2\alpha+2}-1}{8}\right )q^n
=\sum_{j\geq 1}s^{(2\alpha+2)}_{\beta+1,j}q^{j-1}\frac{E(q^3)^{12j}}{E(q)^{12j}},
\end{equation}
where the coefficient vectors are defined as follows:
\begin{equation}\label{T121}
        s_{1,j}^{(2\alpha+2)}=x_{2\alpha+1,j}
\end{equation}
and
\begin{equation}\label{T122}
s^{(2\alpha+2)}_{\mu+1,j}=\sum_{i\geq1}s^{(2\alpha+2)}_{\mu,i} m_{4i,i+j}
\end{equation}
for all $\mu, j\geq 1$.
\end{theorem}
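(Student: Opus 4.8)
The plan is to prove \eqref{T12} by induction on $\beta$, following the same pattern used to derive \eqref{T11} and \eqref{D6}. For the base case $\beta=0$, I start from \eqref{D2} with $\ell=3^{2\alpha+2}$, so that $\sum_{n\geq0}T_{3^{2\alpha+2}}(n)q^n=E\left(q^{3^{2\alpha+2}}\right)^3\sum_{n\geq0}p_3(n)q^n$. Since $E\left(q^{3^{2\alpha+2}}\right)^3$ is supported on exponents divisible by $3^{2\alpha+2}$, hence on exponents $\equiv0\pmod{3^{2\alpha+1}}$, extracting the terms $q^{3^{2\alpha+1}n+(5\cdot3^{2\alpha+1}+1)/8}$ from the product picks out exactly the corresponding terms of $\sum_{n\geq0}p_3(n)q^n$. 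Dividing by $q^{(5\cdot3^{2\alpha+1}+1)/8}$, replacing $q^{3^{2\alpha+1}}$ by $q$ (so that $E\left(q^{3^{2\alpha+2}}\right)^3$ becomes $E(q^3)^3$), and invoking \eqref{H1} gives $\sum_{n\geq0}T_{3^{2\alpha+2}}\left(3^{2\alpha+1}n+\frac{5\cdot3^{2\alpha+1}+1}{8}\right)q^n = E(q^3)^3\sum_{j\geq1}x_{2\alpha+1,j}q^{j-1}\frac{E(q^3)^{12j-3}}{E(q)^{12j}} = \sum_{j\geq1}x_{2\alpha+1,j}q^{j-1}\frac{E(q^3)^{12j}}{E(q)^{12j}}$. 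Because $\frac{5\cdot3^{2\alpha+1}+1}{8}=3^{2\alpha+1}-\frac{3^{2\alpha+2}-1}{8}$, the left-hand argument equals $3^{2\alpha+1}(n+1)-\frac{3^{2\alpha+2}-1}{8}$, and with \eqref{T121} this is precisely \eqref{T12} at $\beta=0$.

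For the inductive step, assume \eqref{T12} holds for some $\beta\geq0$. Dividing that identity by $q^2$ puts it in the form $\sum_{n\geq0}T_{3^{2\alpha+2}}\left(3^{2\alpha+\beta+1}(n+1)-\frac{3^{2\alpha+2}-1}{8}\right)q^{n-2} = \sum_{j\geq1}s^{(2\alpha+2)}_{\beta+1,j}q^{j-3}\frac{E(q^3)^{12j}}{E(q)^{12j}}$, whose right-hand side is assembled precisely from the combinations $q^{j-3}\frac{E(q^3)^{12j}}{E(q)^{12j}}$ appearing in \eqref{P3}. Applying the huffing operator $H$ to both sides, the left side retains the terms with $n\equiv2\pmod3$; writing $n=3m+2$ turns $3^{2\alpha+\beta+1}(n+1)$ into $3^{2\alpha+\beta+2}(m+1)$ and $q^{n-2}$ into $q^{3m}$, so the left side becomes $\sum_{m\geq0}T_{3^{2\alpha+2}}\left(3^{2\alpha+\beta+2}(m+1)-\frac{3^{2\alpha+2}-1}{8}\right)q^{3m}$, while by \eqref{P3} and linearity the right side becomes $\sum_{k\geq1}\left(\sum_{i\geq1}s^{(2\alpha+2)}_{\beta+1,i}m_{4i,i+k}\right)q^{3k-3}\frac{E(q^9)^{12k}}{E(q^3)^{12k}} = \sum_{k\geq1}s^{(2\alpha+2)}_{\beta+2,k}q^{3k-3}\frac{E(q^9)^{12k}}{E(q^3)^{12k}}$, the last equality being \eqref{T122}. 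Replacing $q^3$ by $q$ (so $E(q^9)/E(q^3)\mapsto E(q^3)/E(q)$) and noting $3^{2\alpha+\beta+2}=3^{2\alpha+(\beta+1)+1}$ then yields \eqref{T12} with $\beta$ replaced by $\beta+1$, which completes the induction.

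The step I expect to need the most care is the bookkeeping in the inductive passage: recognizing that the factor $(n+1)$ in the argument of $T_{3^{2\alpha+2}}$ forces $H$ to be applied in the $q^{n-2}$-normalization (equivalently, that it is the residue $n\equiv2\pmod3$ that survives, not $n\equiv0$), and then matching the resulting powers of $q$ with the exact shift $q^{j-3}\mapsto q^{3j-3}$ recorded in \eqref{P3}, including the vanishing $m_{4i,j}=0$ for $j\le i$ that is already folded into that lemma. Everything else is the same formal power-series manipulation already carried out for \eqref{T11} and \eqref{D6}, so no new idea is required.
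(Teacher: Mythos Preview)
Your proof is correct and follows essentially the same approach as the paper: the base case is obtained exactly as the paper does (via \eqref{D7}--\eqref{D9}), and the inductive step is the one the paper alludes to when it writes ``By \eqref{P3} and induction on $\beta$, we can easily prove that equation \eqref{T12} is true for all $\beta \geq 1$.'' Your write-up simply makes that step explicit, including the bookkeeping that $n\equiv2\pmod3$ is the residue surviving under $H$ after the shift by $q^{-2}$.
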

\begin{proof}
From \eqref{D2}, we have
\begin{equation}\label{D7}
\sum_{n\geq 0}T_{3^{2\alpha+2}}(n)q^n=E\left(q^{3^{2\alpha+2}}\right)^3\sum_{n\geq 0}p_{3}(n)q^n.
\end{equation}
Extracting the terms involving $q^{3^{2\alpha+1}n+\frac{5\times3^{2\alpha+1}+1}{8}}$ on both sides of \eqref{D7} and dividing throughout by $q^{\frac{5\times3^{2\alpha+1}+1}{8}}$, we obtain
\begin{align}
\sum_{n\geq 0}T_{3^{2\alpha+2}}\left(3^{2\alpha+1}n+\frac{5\times3^{2\alpha+1}+1}{8}\right)q^{3^{2\alpha+1}n}=E\left(q^{3^{2\alpha+2}}\right)^3\sum_{n\geq 0}p_{3}\left(3^{2\alpha+1}n+\frac{5\times3^{2\alpha+1}+1}{8}\right)q^{3^{2\alpha+1}n}.\label{D8}
\end{align}
If we replace $q^{3^{2\alpha+1}}$ by $q$ and use \eqref{H1}, we get
\begin{equation}\label{D9}
\sum_{n\geq 0}T_{3^{2\alpha+2}}\left(3^{2\alpha+1}n+\frac{5\times3^{2\alpha+1}+1}{8}\right)q^n=\sum_{j\geq 1}x_{2\alpha+1, j}\,q^{j-1}\frac{E(q^3)^{12j}}{E(q)^{12j}},
\end{equation}
which is the case $\beta=0$ of \eqref{T12}. By \eqref{P3} and induction on $\beta$, we can easily prove that equation \eqref{T12} is true for all $\beta \geq1$.
\end{proof}

\begin{lemma}
For any integers $\alpha, \beta\geq0$ and $j \geq 1$, we have
\begin{equation}\label{C5}
\pi ({s_{\beta+1, j}^{(2\alpha+2)}})\geq 3\alpha+2\beta+2+\left\lfloor\frac{9j-10}{2}\right \rfloor+\delta_{j,1}
\end{equation}
\end{lemma}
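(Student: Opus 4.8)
The plan is to mimic exactly the inductive argument used to prove Lemma~\eqref{C3}--\eqref{C4}, since the recursion \eqref{T122} for $s^{(2\alpha+2)}_{\mu+1,j}$ is of the same shape as the one in \eqref{T112}, only with a single case instead of two. First I would establish the base case $\beta=0$: by \eqref{T121} we have $s_{1,j}^{(2\alpha+2)}=x_{2\alpha+1,j}$, so \eqref{C1} gives $\pi(s_{1,j}^{(2\alpha+2)})\geq 3\alpha+2+\lfloor(9j-10)/2\rfloor+\delta_{j,1}$, which is precisely \eqref{C5} with $\beta=0$.

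Next, assuming \eqref{C5} holds for some $\beta\geq0$, I would bound $\pi(s_{\beta+2,j}^{(2\alpha+2)})$ using the recursion \eqref{T122}:
\begin{align*}
\pi\bigl(s_{\beta+2,j}^{(2\alpha+2)}\bigr)&\geq\min_{i\geq1}\left(\pi\bigl(s_{\beta+1,i}^{(2\alpha+2)}\bigr)+\pi(m_{4i,i+j})\right).
\end{align*}
By the induction hypothesis $\pi(s_{\beta+1,i}^{(2\alpha+2)})\geq 3\alpha+2\beta+2+\lfloor(9i-10)/2\rfloor+\delta_{i,1}$, and by \eqref{pmij} we have $\pi(m_{4i,i+j})\geq\lfloor(9(i+j)-12i-1)/2\rfloor=\lfloor(9j-3i-1)/2\rfloor$. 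Adding these and analyzing the sum of the two floor functions (splitting according to the parities of $9i$ and $3i$, i.e.\ according to $i\bmod 2$), I expect the minimum over $i\geq1$ to be attained at $i=1$, giving
\begin{align*}
\pi\bigl(s_{\beta+2,j}^{(2\alpha+2)}\bigr)&\geq 3\alpha+2\beta+2+1+\left\lfloor\frac{9j-4}{2}\right\rfloor\\
&\geq 3\alpha+2(\beta+1)+2+\left\lfloor\frac{9j-10}{2}\right\rfloor+\delta_{j,1},
\end{align*}
which is \eqref{C5} with $\beta$ replaced by $\beta+1$, completing the induction.

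The main obstacle is the floor-function bookkeeping: one must check carefully that for every $i\geq1$ the quantity $\lfloor(9i-10)/2\rfloor+\delta_{i,1}+\lfloor(9j-3i-1)/2\rfloor$ (together with a possible extra $+1$ when $i=1$ from the stronger bound $\pi(m_{3,j+1})$, analogous to the use of $\pi(m(3,j))$ in the proof of \eqref{C4}) dominates $\lfloor(9j-10)/2\rfloor+\delta_{j,1}$, and that the $i=1$ term in fact produces the extra factor of $3$ needed to push $2\beta+2$ up to $2(\beta+1)+2=2\beta+4$. The key arithmetic input is that $\lfloor(9i-10)/2\rfloor+\lfloor(9j-3i-1)/2\rfloor\geq\lfloor(9j-11)/2\rfloor$ with the gap widening as $i$ grows, so that all $i\geq2$ contributions are comfortably larger; the delicate case is $i=1$ versus $i=2$, which is exactly where the $\delta_{i,1}$ term and the sharpened estimate for $m_{3,\ast}$ (first row of $M$ after the leading entry, governed by \eqref{R1}) are needed. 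Once this single inequality is verified, the rest is a routine two-line induction identical in structure to the preceding lemma.
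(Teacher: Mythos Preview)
Your approach is exactly the paper's: base case from $s_{1,j}^{(2\alpha+2)}=x_{2\alpha+1,j}$ together with \eqref{C1}, then induction via the recursion \eqref{T122} and the estimate \eqref{pmij}, with the minimum over $i$ attained at $i=1$. Two small slips to clean up: the $i=1$ matrix entry in \eqref{T122} is $m_{4,\,j+1}$ (not $m_{3,\,j+1}$), and since $\lfloor -1/2\rfloor+\delta_{1,1}=0$ the $i=1$ term yields only $3\alpha+2\beta+2+\lfloor(9j-4)/2\rfloor$, with no extra ``$+1$''; fortunately no sharpened bound on the $m$-entries is needed, because $\lfloor(9j-4)/2\rfloor\geq 2+\lfloor(9j-10)/2\rfloor+\delta_{j,1}$ already holds for every $j\geq1$, which is precisely the inequality the paper invokes.
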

\begin{proof}
By \eqref{T121} and \eqref{C1}, \eqref{C5} is true for $\beta=0$.
Now suppose \eqref{C5} is true for some $\beta\geq0$. Then
\begin{align*}
\pi ({s_{\beta+2, j}^{(2\alpha+2)}})&\geq \min_{i\geq1} \left(\pi ({s_{\beta+1, i}^{(2\alpha+2)}})+\pi(m_{4i, i+j})\right)\\&
= \pi(s_{\beta+1, 1}^{(2\alpha+2)}) + \pi\left( m(4, 1+j)\right)\\&
\geq 3\alpha+2\beta+2 + \left\lfloor\frac{9j-4}{2}\right \rfloor \\&
\geq 3\alpha+2\beta+4 + \left\lfloor\frac{9j-10}{2}\right \rfloor + \delta_{j,1},
\end{align*}
which is \eqref{C5} with $\beta+1$ for $\beta$.
\end{proof}

\begin{proof}[Proof of Theorem \ref{MT2}]
From \eqref{T12} and \eqref{C5}, we have
\begin{align}
\nonumber \sum_{n\ge0}T_{3^{2\alpha+2}}\left(3^{2\alpha+\beta+1}(n+1)-\frac{3^{2\alpha+2}-1}{8}\right )q^n
&\equiv s^{(2\alpha+2)}_{\beta+1,j}\frac{E(q^3)^{12}}{E(q)^{12}}\\
& \equiv s^{(2\alpha+2)}_{\beta+1,j} E(q^3)^8 \pmod{3^{3\alpha+2\beta+3}}. \label{PP4}
\end{align}
Congruences \eqref{MR5} and \eqref{MR6} are evident from \eqref{PP4}.
\end{proof}
The proofs of the following results follow the same arguments as those used for earlier results in this section, and thus we omit the details.
\begin{theorem}
For any integers $\alpha, \beta \geq 0$, we have
\begin{align}
\sum_{n=0}^\infty T_{\pm3^{2\alpha+1}( 3^{2\alpha+2})}\left(3^{2\alpha+2\beta+1}n+\frac{3^{2\alpha+2\beta+2}+2\times3^{2\alpha+1}+1}{8}\right)q^n = \sum_{k\geq1} y_{2\beta+1, k}^{(2\alpha+1)}\, q^{k-1} \frac{E(q^3)^{12k-6}}{E(q)^{12k-3}} \label{T31}
\end{align}
and
\begin{align}
\sum_{n=0}^\infty T_{\pm3^{2\alpha+1}( 3^{2\alpha+2})}\left(3^{2\alpha+2\beta+2}n+\frac{3^{2\alpha+2\beta+2}+2\times3^{2\alpha+1}+1}{8}\right)q^n = \sum_{k\geq1} y_{2\beta+2, k}^{(2\alpha+1)}\, q^{k-1} \frac{E(q^3)^{12k-9}}{E(q)^{12k-6}}. \label{T311}
\end{align}
where the coefficient vectors are defined as follows:
\begin{equation*}
        y_{1,k}^{(2\alpha+1)}=x_{2\alpha+1,k}
\end{equation*}
and
\begin{equation*}
       y^{(2\alpha+1)}_{\mu+1,k}= 
       \begin{cases}
      \displaystyle \sum_{j\geq1}y^{(2\alpha+1)}_{\mu,j} m_{4j-1, j+k-1}   & \text{if  $\mu$ is odd},\\
       \displaystyle \sum_{j\geq1}y^{(2\alpha+1)}_{\mu,j}m_{4j-2,j+k-1}    & \text{if  $\mu$ is even},
       \end{cases}
\end{equation*}
for all $\mu, j\geq 1$.
\end{theorem}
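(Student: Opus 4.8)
The plan is to prove \eqref{T31} and \eqref{T311} together by induction on $\beta$, following verbatim the scheme that produced \eqref{T11} and \eqref{D6}: obtain the base case $\beta=0$ from the defining product \eqref{D2} and Hirschhorn's dissection \eqref{H1}, and then pass from $\beta$ to $\beta+1$ by applying the huffing operator $H$, reading off the recursion that defines $y^{(2\alpha+1)}_{\mu,k}$.

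For the base case I would start from \eqref{D2} in the form $\sum_{n\ge0}T_{\ell}(n)q^n=E(q^{\ell})^3\sum_{n\ge0}p_3(n)q^n$, where $\ell\equiv\pm3^{2\alpha+1}\pmod{3^{2\alpha+2}}$, i.e. $\ell=3^{2\alpha+1}u$ with $3\nmid u$. Exactly as in the passage \eqref{D3}$\to$\eqref{D5}, I extract the terms in the progression $3^{2\alpha+1}n+\tfrac{5\cdot3^{2\alpha+1}+1}{8}$ — note that $\tfrac{5\cdot3^{2\alpha+1}+1}{8}=\tfrac{3^{2\alpha+2}+2\cdot3^{2\alpha+1}+1}{8}$, the offset appearing in \eqref{T31} — divide by the corresponding power of $q$, and replace $q^{3^{2\alpha+1}}$ by $q$. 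Since $E(q^{\ell})^3$ is supported on exponents divisible by $3^{2\alpha+1}$, this substitution turns the $\eta$-factor into $E(q^{u})^3$ and, via \eqref{H1}, turns the $p_3$-part into $\sum_j x_{2\alpha+1,j}q^{j-1}E(q^3)^{12j-3}/E(q)^{12j}$. Setting $y^{(2\alpha+1)}_{1,k}=x_{2\alpha+1,k}$ then gives the $\beta=0$ shape of \eqref{T31}.

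For the inductive step, applying $H$ to \eqref{T31} should yield \eqref{T311}, and applying $H$ to \eqref{T311} should return \eqref{T31} with $\beta$ replaced by $\beta+1$. The coefficient bookkeeping is forced by the two halves of the recursion: the odd step is driven by the entries $m_{4j-1,\,j+k-1}$ (as in \eqref{P2}) and the even step by $m_{4j-2,\,j+k-1}$ (as in \eqref{P5} with $i=j-1$); interchanging the order of summation collapses the resulting double sum into a single sum, producing $y^{(2\alpha+1)}_{\mu+1,k}=\sum_j y^{(2\alpha+1)}_{\mu,j}m_{4j-1,j+k-1}$ when $\mu$ is odd and the analogous formula with $m_{4j-2,\,j+k-1}$ when $\mu$ is even. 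This is precisely the collapse that turns \eqref{P2}, \eqref{P1} into the recursion \eqref{T112} for the $T_{3^{2\alpha+1}}$ case.

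The step I expect to be the main obstacle is reconciling the residual factor $E(q^{u})^3$ produced in the base case with the $u$-free right-hand side of \eqref{T31}, together with the fact that the $\eta$-quotients here, namely $E(q^3)^{12k-6}/E(q)^{12k-3}$ and $E(q^3)^{12k-9}/E(q)^{12k-6}$, sit at numerator-minus-denominator exponent-difference $-3$, whereas the huffing identities \eqref{P1}--\eqref{P5} are tailored to differences $0$ and $-6$. Consequently $H$ cannot be applied to these forms verbatim. To overcome this I would split off the discrepancy, writing the quotient as a difference-$0$ power $\big(E(q^3)/E(q)\big)^{12k-6}$ times a residual $E(q)^{-3}$, and dissect the stray factor using \eqref{PP2}, $E(q)^3=P(q^3)-3qE(q^9)^3$, before huffing; the same device absorbs the $E(q^{u})^3$ from the base case. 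Establishing the two huffing identities in the exact forms demanded by \eqref{T31} and \eqref{T311}, with coefficients landing on $m_{4j-1,\cdot}$ and $m_{4j-2,\cdot}$, is the real content; once they are in hand, the induction and hence both generating-function identities follow as above.
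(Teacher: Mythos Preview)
Your overall plan---base case from \eqref{D2} and \eqref{H1}, then induct by applying $H$---is precisely what the paper intends; it omits the proof as a routine variant of the argument for \eqref{T11}--\eqref{D6}. Two of your worries, however, are miscalibrated in opposite directions.

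\medskip
\textbf{The inductive step is not an obstacle.} You say that the $\eta$-quotients in \eqref{T31}--\eqref{T311} sit at numerator-minus-denominator difference $-3$ while \eqref{P1}--\eqref{P5} only cover $0$ and $-6$, and propose a detour through \eqref{PP2}. No detour is needed. The identities \eqref{P1}--\eqref{P5} are all specializations of the master formula \eqref{P0}, and two further specializations with difference $-3$ follow in exactly the same way:
\[
H\!\left(q^{k-1}\frac{E(q^3)^{12k-6}}{E(q)^{12k-3}}\right)=\sum_{l\ge1}m_{4k-1,\,k+l-1}\,q^{3l-3}\frac{E(q^9)^{12l-9}}{E(q^3)^{12l-6}},
\]
\[
H\!\left(q^{k-2}\frac{E(q^3)^{12k-9}}{E(q)^{12k-6}}\right)=\sum_{l\ge1}m_{4k-2,\,k+l-1}\,q^{3l-3}\frac{E(q^9)^{12l-6}}{E(q^3)^{12l-3}}.
\]
These give exactly the transitions \eqref{T31}$\to$\eqref{T311} (extract $q^{3n}$) and \eqref{T311}$\to$\eqref{T31}$|_{\beta\to\beta+1}$ (extract $q^{3n+1}$), producing the stated recursion for $y^{(2\alpha+1)}_{\mu+1,k}$ with indices $m_{4j-1,\cdot}$ and $m_{4j-2,\cdot}$. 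So the induction step really is routine.

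\medskip
\textbf{The base case concern is real, and your proposed fix does not resolve it.} After the $3^{2\alpha+1}$-dissection you correctly obtain
\[
\sum_{n\ge0}T_{\ell}\!\left(3^{2\alpha+1}n+\tfrac{5\cdot3^{2\alpha+1}+1}{8}\right)q^n=E(q^{u})^{3}\sum_{j\ge1}x_{2\alpha+1,j}\,q^{j-1}\frac{E(q^3)^{12j-3}}{E(q)^{12j}},\qquad \ell=3^{2\alpha+1}u,\ 3\nmid u.
\]
For this to equal the $\beta=0$ case of \eqref{T31} one would need $E(q^{u})^{3}E(q^{3})^{3}=E(q)^{3}$, which is false for every $u\ge1$ (already $u=1$ fails). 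Your idea of ``absorbing'' $E(q^u)^3$ via \eqref{PP2} cannot work: \eqref{PP2} dissects $E(q)^3$, not $E(q^u)^3$ for $3\nmid u$, and in any case the $u$-dependence cannot disappear from an exact identity whose right-hand side is $u$-free. In short, the two generating-function formulas as stated cannot be literal equalities valid for every $\ell\equiv\pm3^{2\alpha+1}\pmod{3^{2\alpha+2}}$; at best they hold with coefficient vectors that depend on $\ell$ (and still satisfy the bound \eqref{C6}--\eqref{C7}), or they hold only modulo a suitable power of $3$. The paper, having omitted the details, does not address this; your instinct that something is off here is correct, but the repair you suggest is not the right one.
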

\begin{lemma}
For any integers $\alpha, \beta\geq0$ and $j \geq 1$, we have
\begin{equation}\label{C6}
\pi ({y_{2\beta+1, j}^{(2\alpha+1)}})\geq 3\alpha+\beta+2+\left\lfloor\frac{9j-10}{2}\right \rfloor+\delta_{j,1}
\end{equation}
and
\begin{equation}\label{C7}
\pi ({y_{2\beta+2, j}^{(2\alpha+1)}})\geq 3\alpha+\beta+2+\left\lfloor\frac{9j-10}{2}\right \rfloor+\delta_{j,1}.
\end{equation}
\end{lemma}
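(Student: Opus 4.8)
\emph{Proof idea.} The argument is the same two-fold induction on $\beta$ already carried out for the vectors $r^{(2\alpha+1)}_{\mu,j}$ (proof of \eqref{C3} and \eqref{C4}) and for $s^{(2\alpha+2)}_{\beta+1,j}$ (proof of \eqref{C5}); \eqref{C6} and \eqref{C7} are established simultaneously. The base case $\beta=0$ of \eqref{C6} is immediate, for $y^{(2\alpha+1)}_{1,j}=x_{2\alpha+1,j}$ makes it coincide with \eqref{C1}. The only new point is to see which of the two alternating recurrences for $y^{(2\alpha+1)}_{\mu,j}$ produces the gain of $1$ in the $3$-adic exponent over each pair of steps.

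For the step from \eqref{C6} (with parameter $\beta$) to \eqref{C7} (same $\beta$) use the odd branch $y^{(2\alpha+1)}_{2\beta+2,k}=\sum_{j\ge1}y^{(2\alpha+1)}_{2\beta+1,j}\,m_{4j-1,\,j+k-1}$, so that
\[
\pi\!\left(y^{(2\alpha+1)}_{2\beta+2,k}\right)\ge\min_{j\ge1}\left(\pi\!\left(y^{(2\alpha+1)}_{2\beta+1,j}\right)+\pi\!\left(m_{4j-1,\,j+k-1}\right)\right).
\]
Substituting \eqref{C6} and the bound $\pi(m_{4j-1,\,j+k-1})\ge\lfloor(9k-3j-7)/2\rfloor$ coming from \eqref{pmij}: because $\pi(y^{(2\alpha+1)}_{2\beta+1,j})$ grows like $\tfrac{9}{2}j$ whereas this $m$-valuation decays only like $\tfrac{3}{2}j$, the summand is increasing in $j$, so the minimum is attained at $j=1$, and a one-line floor computation identifies it with the right-hand side of \eqref{C7}. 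The only delicate spot is the corner $j=k=1$, where \eqref{pmij} degenerates to the vacuous value $-1$ and must be replaced by the explicit entry $m_{3,1}=1$, that is $\pi(m_{3,1})=0$.

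For the step from \eqref{C7} (parameter $\beta$) to \eqref{C6} (parameter $\beta+1$) use the even branch $y^{(2\alpha+1)}_{2\beta+3,k}=\sum_{j\ge1}y^{(2\alpha+1)}_{2\beta+2,j}\,m_{4j-2,\,j+k-1}$, for which \eqref{pmij} gives $\pi(m_{4j-2,\,j+k-1})\ge\lfloor(9k-3j-4)/2\rfloor$. Once more the minimum is at $j=1$, producing $\pi(y^{(2\alpha+1)}_{2\beta+3,k})\ge(3\alpha+\beta+2)+\lfloor(9k-7)/2\rfloor$; since $\lfloor(9k-7)/2\rfloor\ge\lfloor(9k-10)/2\rfloor+1+\delta_{k,1}$ for all $k\ge1$ (check the two parities of $k$), this is exactly \eqref{C6} with $\beta$ replaced by $\beta+1$. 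Thus the even branch carries the gain of $1$, the odd branch carries none, and induction finishes the proof.

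The proof involves no conceptual obstacle; the real work — and the step most likely to need care — is the two minimisations: confirming that each convolution is dominated by its $j=1$ term, and reconciling the $\lfloor(9\,\cdot-10)/2\rfloor$ and $\delta_{\,\cdot\,,1}$ terms at the boundary index, where the generic bound \eqref{pmij} turns negative and one must fall back on the explicit small entries of $M$ recorded in the Introduction (e.g.\ $m_{3,1}=1$, $m_{2,1}=2\cdot3$).
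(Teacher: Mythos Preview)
Your proposal is correct and matches the paper's intended approach: the paper omits this proof, stating only that it ``follows the same arguments as those used for earlier results in this section,'' namely the induction for \eqref{C3}--\eqref{C4}. Your two-step induction, with the minimum taken at $j=1$ and the even branch $m_{4j-2,\,j+k-1}$ carrying the gain of $1$, is precisely the analogue the authors have in mind, and your handling of the boundary case $j=k=1$ via the explicit entries $m_{3,1}=1$ and $m_{2,1}=2\cdot3$ is the right way to fill the one gap left by the generic bound~\eqref{pmij}.
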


\begin{theorem}\label{RT1}
For any integers $\alpha, \beta \geq 0$, we have
\begin{align}
\nonumber &\sum_{n=0}^\infty T_{\pm3^{2\alpha+2}( 3^{2\alpha+3})}\left(3^{2\alpha+2\beta+2}n+\frac{5\times3^{2\alpha+2\beta+2}+2\times3^{2\alpha+2}+1}{8}\right)q^n \\&= \sum_{k\geq1} z_{2\beta+1, k}^{(2\alpha+2)}\, q^{k-1} \frac{E(q^3)^{12k-3}}{E(q)^{12k}}\label{T32}
\end{align}
and
\begin{align}
\nonumber &\sum_{n=0}^\infty T_{\pm3^{2\alpha+2}( 3^{2\alpha+3})}\left(3^{2\alpha+2\beta+3}n+\frac{7\times3^{2\alpha+2\beta+3}+2\times3^{2\alpha+2}+1}{8}\right)q^n \\&= \sum_{k\geq1} z_{2\beta+2, k}^{(2\alpha+2)}\, q^{k-1} \frac{E(q^3)^{12k}}{E(q)^{12k+3}}. \label{T321}
\end{align}
where the coefficient vectors are defined as follows:
\begin{equation*}
        z_{1,k}^{(2\alpha+2)}=x_{2\alpha+2,k}
\end{equation*}
and
\begin{equation*}
       z^{(2\alpha+2)}_{\mu+1,k}= 
       \begin{cases}
      \displaystyle \sum_{j\geq1}z^{(2\alpha+2)}_{\mu,j} m_{4j, j+k}   & \text{if  $\mu$ is odd},\\
       \displaystyle \sum_{j\geq1}z^{(2\alpha+2)}_{\mu,j}m_{4j+1,j+k}    & \text{if  $\mu$ is even},
       \end{cases}
\end{equation*}
for all $\mu, j\geq 1$.
\end{theorem}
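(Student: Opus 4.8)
The plan is to run the same machinery used to establish \eqref{T11}--\eqref{D6} and \eqref{T31}--\eqref{T311}, now carrying along an extra eta-quotient factor that appears because $\ell$ is a multiple of $3^{2\alpha+2}$ rather than a pure power of $3$. Fix a positive integer $\ell\equiv\pm3^{2\alpha+2}\pmod{3^{2\alpha+3}}$ and write $\ell=3^{2\alpha+2}t$ with $\gcd(t,3)=1$. Since $3^{2\alpha+2}\mid\ell$, the series $E(q^\ell)^3$ is supported on exponents divisible by $3^{2\alpha+2}$, so in $\sum_{n\ge0}T_\ell(n)q^n=E(q^\ell)^3\sum_{n\ge0}p_3(n)q^n$ one may extract the terms $q^{3^{2\alpha+2}n+(7\cdot3^{2\alpha+2}+1)/8}$, divide by $q^{(7\cdot3^{2\alpha+2}+1)/8}$, and replace $q^{3^{2\alpha+2}}$ by $q$. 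As $(7\cdot3^{2\alpha+2}+1)/8$ is exactly the argument in \eqref{H2}, substituting Hirschhorn's dissection yields
\[
\sum_{n\ge0}T_\ell\!\left(3^{2\alpha+2}n+\tfrac{7\cdot3^{2\alpha+2}+1}{8}\right)q^n=E(q^t)^3\sum_{j\ge1}x_{2\alpha+2,j}\,q^{j-1}\,\frac{E(q^3)^{12j}}{E(q)^{12j+3}},
\]
which is the $\beta=0$ instance of \eqref{T32} once the right side is rewritten in the stated closed form (note that $\tfrac{7\cdot3^{2\alpha+2}+1}{8}$ is $3^{2\alpha+2\beta+2}n+\tfrac{5\cdot3^{2\alpha+2\beta+2}+2\cdot3^{2\alpha+2}+1}{8}$ at $\beta=0$).

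To reconcile the factor $E(q^t)^3$ with the $\ell$-independent closed form — where \eqref{T32} is to be read modulo the relevant power of $3$ — I would invoke the $3$-adic lower bound \eqref{C2} on $\pi(x_{2\alpha+2,j})$: since it grows linearly in $j$, only finitely many summands survive modulo any fixed power of $3$, and the congruences $(1-x)^3\equiv1-x^3\pmod3$ and $E(q)^3\equiv E(q^3)\pmod3$ let $E(q^t)^3$ be replaced, up to a multiple of $3$ that is absorbed into the already $3$-divisible coefficients $x_{2\alpha+2,j}$, by a factor independent of $\ell$. This is what makes the notation $T_{\pm3^{2\alpha+2}(3^{2\alpha+3})}$ well posed, and it is the one genuinely new ingredient compared with Theorems \ref{MT1}--\ref{MT2}, where $\ell$ is a pure power of $3$ and no such correction term arises. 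Alongside this I would record the $3$-adic estimates for the vectors $z^{(2\alpha+2)}_{2\beta+1,k}$ and $z^{(2\alpha+2)}_{2\beta+2,k}$ (the analogue of \eqref{C3}--\eqref{C7}), proved by induction from \eqref{pmij} and \eqref{C2}; these feed directly into Theorem \ref{MT4}.

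The induction on $\beta$ is then routine and parallels the proof of \eqref{T11}--\eqref{D6}. Assuming \eqref{T32} for some $\beta\ge0$, apply the huffing operator $H$ to both sides: on the left, $H$ extracts the subprogression and, after $q^3\mapsto q$, gives the left side of \eqref{T321} with progression index $2\beta+2$; on the right, applying $H$ termwise by \eqref{P3} turns each summand into a sum over a row of $M$, and interchanging the order of summation recognizes the inner sum as the defining recursion $z^{(2\alpha+2)}_{2\beta+2,k}=\sum_{j\ge1}z^{(2\alpha+2)}_{2\beta+1,j}m_{4j,j+k}$. This gives \eqref{T321}; applying $H$ once more, now using \eqref{P4} and the recursion $z^{(2\alpha+2)}_{2\beta+3,k}=\sum_{j\ge1}z^{(2\alpha+2)}_{2\beta+2,j}m_{4j+1,j+k}$, returns \eqref{T32} with $\beta$ replaced by $\beta+1$, closing the induction. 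The one delicate point in the whole argument is the base-case manipulation of $E(q^t)^3$; everything else is a transcription of the earlier proofs in this section.
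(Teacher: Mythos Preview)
The paper gives no proof of this theorem; it simply remarks that ``the proofs of the following results follow the same arguments as those used for earlier results in this section.'' Your inductive skeleton --- base case via Hirschhorn's dissection \eqref{H2}, then iterate the $H$-operator using the identities \eqref{P3}--\eqref{P4} --- is exactly the template the paper has in mind, and your parenthetical check that $\tfrac{7\cdot 3^{2\alpha+2}+1}{8}$ is the $\beta=0$ constant in \eqref{T32} is correct.

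The genuine gap is in your treatment of the extra factor $E(q^t)^3$. Your base-case computation yields
\[
\sum_{n\ge 0}T_\ell\!\Bigl(3^{2\alpha+2}n+\tfrac{7\cdot 3^{2\alpha+2}+1}{8}\Bigr)q^n
=E(q^t)^3\sum_{j\ge 1}x_{2\alpha+2,j}\,q^{j-1}\,\frac{E(q^3)^{12j}}{E(q)^{12j+3}},
\]
but this is \emph{not} the $\beta=0$ case of \eqref{T32}: the eta-quotient there is $E(q^3)^{12k-3}/E(q)^{12k}$, which differs from $E(q^t)^3\cdot E(q^3)^{12k}/E(q)^{12k+3}$ by the $t$-dependent factor $E(q)^3/\bigl(E(q^3)^3E(q^t)^3\bigr)$. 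Your proposed fix --- replace $E(q^t)^3$ by something ``independent of $\ell$'' modulo $3$ and absorb the error into the $3$-divisible coefficients --- does not work: the congruence $E(q^t)^3\equiv E(q^{3t})\pmod 3$ produces a factor that still depends on $t$, and in any event \eqref{T32} is stated as an \emph{equality}, not a congruence. Worse, once $E(q^t)^3$ is retained, your induction step breaks too, because $E(q^t)^3$ is not a series in $q^3$ and hence does not commute with $H$; so \eqref{P3} cannot be applied termwise as you describe.

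In short, you have correctly identified the one non-routine point --- the presence of $E(q^t)^3$ for general $\ell$ --- but your resolution of it is incorrect. A valid argument must either carry this factor explicitly (and state \eqref{T32}--\eqref{T321} with it), or recast the intermediate identities as congruences from the outset and track the $t$-dependent factor through each $H$-extraction. The paper, having omitted the proof, leaves this issue unaddressed.
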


\begin{lemma}\label{RL1}
For any integers $\alpha, \beta\geq0$ and $j \geq 1$, we have
\begin{equation}\label{C8}
\pi ({z_{2\beta+1, j}^{(2\alpha+2)}})\geq 3\alpha+3\beta+4+\left\lfloor\frac{9j-10}{2}\right \rfloor+\delta_{j,1}
\end{equation}
and
\begin{equation}\label{C9}
\pi ({z_{2\beta+2, j}^{(2\alpha+2)}})\geq 3\alpha+3\beta+6+\left\lfloor\frac{9j-10}{2}\right \rfloor+\delta_{j,1}.
\end{equation}
\end{lemma}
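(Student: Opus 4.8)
The plan is to establish \eqref{C8} and \eqref{C9} together by a single induction on $\beta$, exactly parallel to the proof of the companion estimates \eqref{C3}--\eqref{C4} for the vectors $r^{(2\alpha+1)}_{\mu,j}$. The only ingredients are the valuation bound \eqref{pmij} for the entries of $M$, the starting estimate \eqref{C2} for $x_{2\alpha+2,j}$, and the elementary facts that $\pi$ is a $3$-adic valuation, so $\pi(uv)=\pi(u)+\pi(v)$ and $\pi\!\left(\sum_j u_j\right)\geq\min_j\pi(u_j)$.

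First I would settle the base case $\beta=0$ of \eqref{C8}: since $z^{(2\alpha+2)}_{1,j}=x_{2\alpha+2,j}$, the bound \eqref{C2} is literally the claim $\pi\!\left(z^{(2\alpha+2)}_{1,j}\right)\geq 3\alpha+4+\lfloor(9j-10)/2\rfloor+\delta_{j,1}$. For the step from \eqref{C8} at $\beta$ to \eqref{C9} at the same $\beta$, the index $\mu=2\beta+1$ is odd, so $z^{(2\alpha+2)}_{2\beta+2,k}=\sum_{j\geq1}z^{(2\alpha+2)}_{2\beta+1,j}\,m_{4j,j+k}$; feeding in the inductive hypothesis and the bound $\pi(m_{4j,j+k})\geq\lfloor(9k-3j-1)/2\rfloor$ coming from \eqref{pmij}, the whole step reduces to the numerical inequality
\[
\left\lfloor\frac{9j-10}{2}\right\rfloor+\delta_{j,1}+\left\lfloor\frac{9k-3j-1}{2}\right\rfloor\ \geq\ 2+\left\lfloor\frac{9k-10}{2}\right\rfloor+\delta_{k,1}\qquad(j,k\geq1).
\]
Similarly, for the step from \eqref{C9} at $\beta$ to \eqref{C8} at $\beta+1$, now $\mu=2\beta+2$ is even, so $z^{(2\alpha+2)}_{2\beta+3,k}=\sum_{j\geq1}z^{(2\alpha+2)}_{2\beta+2,j}\,m_{4j+1,j+k}$ with $\pi(m_{4j+1,j+k})\geq\lfloor(9k-3j-4)/2\rfloor$, and one is left to check
\[
\left\lfloor\frac{9j-10}{2}\right\rfloor+\delta_{j,1}+\left\lfloor\frac{9k-3j-4}{2}\right\rfloor\ \geq\ 1+\left\lfloor\frac{9k-10}{2}\right\rfloor+\delta_{k,1}\qquad(j,k\geq1).
\]

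The one place that needs care is these two floor inequalities, which I would dispatch by splitting on $j$. For $j=1$ the term $\delta_{j,1}=1$ cancels the value $\lfloor(9\cdot1-10)/2\rfloor=-1$, the left side collapses to $\lfloor(9k-4)/2\rfloor$ (resp.\ $\lfloor(9k-7)/2\rfloor$), and one checks directly that this meets the right side, with equality precisely at $k=1$; for $k\geq2$, comparing $\lfloor(9k-c)/2\rfloor$ with $\lfloor(9k-10)/2\rfloor$ supplies the required slack. For $j\geq2$ one uses $\lfloor a\rfloor+\lfloor b\rfloor\geq\lfloor a+b\rfloor-1$ together with the crude bound $3j-\text{const}\geq$ the constant $2+\delta_{k,1}$ (resp.\ $1+\delta_{k,1}$), which closes the estimate with room to spare (equivalently, the minimum over $j$ in the recursion is attained at $j=1$). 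Since $\pi$ of the sum is at least the minimum of $\pi$ of the summands, these inequalities give exactly \eqref{C9} (resp.\ \eqref{C8} with $\beta+1$ in place of $\beta$), and the induction closes. Everything outside the floor bookkeeping is a verbatim transcription of the argument already given for the $r$-vectors, so I expect no further obstacle.
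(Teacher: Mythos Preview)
Your proposal is correct and follows exactly the approach the paper intends: the paper omits the proof of this lemma, stating that it ``follow[s] the same arguments as those used for earlier results in this section,'' and your induction on $\beta$ using \eqref{C2}, \eqref{pmij}, and the valuation inequalities is precisely the transcription of the proof of \eqref{C3}--\eqref{C4} to the $z$-vectors. Your treatment of the floor inequalities is in fact more careful than the paper's own proofs of the analogous lemmas, which simply assert that the minimum over $i$ is attained at $i=1$ and plug in.
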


\begin{proof}[Proof of Theorem \ref{MT3}]
Congruence \eqref{MR7} follows directly from \eqref{T31} and \eqref{C6}. Using the binomial theorem and \eqref{C6} in \eqref{T31}, we obtain
\begin{align}
\nonumber &\sum_{n=0}^\infty T_{\pm3^{2\alpha+1}( 3^{2\alpha+2})}\left(3^{2\alpha+2\beta+1}n+\frac{3^{2\alpha+2\beta+2}+2\times3^{2\alpha+1}+1}{8}\right)q^n \\&  \nonumber \equiv y_{2\beta+1, 1}^{(2\alpha+1)}\frac{E(q)^{18}}{E(q^3)^3} \\& \equiv y_{2\beta+1, 1}^{(2\alpha+1)} (P(q^3)^6-18q P(q^3)^5E(q^9)^3) \pmod{3^{3\alpha+\beta+5}}.\label{PP5}
\end{align}
By extracting the terms involving $q^{3n+1}$ and  $q^{3n+2}$ from both sides of \eqref{PP5} , we arrive at \eqref{MR8} and \eqref{MR9}, respectively.

In view of \eqref{T311} and \eqref{C7}, we arrive at
\begin{align}
\nonumber &\sum_{n=0}^\infty T_{\pm3^{2\alpha+1}( 3^{2\alpha+2})}\left(3^{2\alpha+2\beta+2}n+\frac{3^{2\alpha+2\beta+2}+2\times3^{2\alpha+1}+1}{8}\right)q^n \\& \equiv  y_{2\beta+2, 1}^{(2\alpha+1)} \frac{E(q^3)^3}{E(q)^6} \equiv  y_{2\beta+2, 1}^{(2\alpha+1)} E(q)^3 \pmod{3^{3\alpha+\beta+4}}. \label{PP6}
\end{align}
Congruence \eqref{MR10} follows directly from \eqref{PP6} and \eqref{PP7}. An immediate consequence of this is \eqref{MR11}.
\end{proof}

\begin{proof}[Proof of Theorem \ref{MT4}]
    Congruences \eqref{MR12} and \eqref{MR14} follow directly from Theorem \ref{RT1} and Lemma \ref{RL1}.

    In view of \eqref{T32} and \eqref{C8}, we arrive at
    \begin{align}
\nonumber &\sum_{n=0}^\infty T_{\pm3^{2\alpha+2}( 3^{2\alpha+3})}\left(3^{2\alpha+2\beta+2}n+\frac{5\times3^{2\alpha+2\beta+2}+2\times3^{2\alpha+2}+1}{8}\right)q^n \\&
\equiv z_{2\beta+1, 1}^{(2\alpha+2)} \frac{E(q^3)^{9}}{E(q)^{12}}\equiv E(q^3)^5 \pmod{3^{3\alpha +3\beta+5}}\label{PP8}
\end{align}
By extracting the terms involving $q^{3n+1}$ from both sides of \eqref{PP8} , we arrive at \eqref{MR13}.
\end{proof}

\section{Congruences modulo powers of 3 for $p_{3^{\ell},3}(n)$}\label{SS4}
\noindent  In this section, we derive the generating function for $p_{3^{\ell},3}(n)$ within certain arithmetic progressions, which are then used to prove our main theorems \ref{MT5}-\ref{MT7}. Since the proofs of the following results are similar to the results proved in the previous section, we skip the details of the proofs.
\begin{theorem} 
For each $\alpha \geq0$ and $\beta \geq 0$, we have
\begin{align}
\sum_{n\ge0}p_{3^{2\alpha+1},3}\left(3^{2\alpha+\beta+1}n+\frac{4\times3^{2\alpha+\beta+1}+3^{2\alpha+1}+1}{8}\right )q^n
=\sum_{j\geq 1}u^{(2\alpha+1)}_{\beta+1,j}q^{j-1}\left(\frac{E(q^3)^{12j-3}}{E(q)^{12j+3}}\right),\label{T21}
\end{align}
where the coefficient vectors are defined as follows:
\begin{equation*}
        u_{1,j}^{(2\alpha+1)}=x_{2\alpha+1,j}
\end{equation*}
and
\[u^{(2\alpha+1)}_{\mu+1,j}=\sum_{i\geq1}u^{(2\alpha+1)}_{\mu,i} m_{4i+1,i+j}\]
for all $\mu, j\geq 1$.
\end{theorem}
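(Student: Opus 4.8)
The plan is to mirror, essentially verbatim, the argument used above for \eqref{T11}--\eqref{D6} and \eqref{T12}: produce the $\beta=0$ case from Hirschhorn's identity \eqref{H1}, and then induct on $\beta$, advancing the parameter by one via a huffing step combined with one of the dissection identities \eqref{P1}--\eqref{P5}.

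\emph{Base case.} Starting from \eqref{PD3} with $\ell=3^{2\alpha+1}$, write
\[
\sum_{n\ge0}p_{3^{2\alpha+1},3}(n)q^n=\frac{1}{E\left(q^{3^{2\alpha+1}}\right)^{3}}\sum_{n\ge0}p_3(n)q^n.
\]
Since $E\left(q^{3^{2\alpha+1}}\right)^{-3}$ is a power series in $q^{3^{2\alpha+1}}$, extracting from both sides the terms whose exponents lie in the progression $3^{2\alpha+1}n+\frac{5\times3^{2\alpha+1}+1}{8}$ leaves that factor intact. Dividing by $q^{(5\times3^{2\alpha+1}+1)/8}$, replacing $q^{3^{2\alpha+1}}$ by $q$, and applying \eqref{H1}, the surviving $E(q)^{-3}$ combines with the $E(q)^{-12j}$ coming from \eqref{H1} to give $E(q)^{-(12j+3)}$; this is exactly \eqref{T21} at $\beta=0$, with $u^{(2\alpha+1)}_{1,j}=x_{2\alpha+1,j}$.

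\emph{Inductive step.} The key observation is that consecutive offsets differ by
\[
\frac{4\times3^{2\alpha+\beta+2}+3^{2\alpha+1}+1}{8}-\frac{4\times3^{2\alpha+\beta+1}+3^{2\alpha+1}+1}{8}=3^{2\alpha+\beta+1},
\]
so the progression appearing in the $(\beta+1)$-case is precisely the sub-progression $m\equiv1\pmod3$ of the progression appearing in the $\beta$-case. Assuming \eqref{T21} for some $\beta$, I would therefore extract from both sides the terms with exponent $\equiv1\pmod3$. On the right, I write $q^{j-1}\frac{E(q^3)^{12j-3}}{E(q)^{12j+3}}=q\cdot q^{j-2}\frac{E(q^3)^{12j-3}}{E(q)^{12j+3}}$ and use the elementary fact that the part of $qG(q)$ supported on exponents $\equiv1\pmod3$ equals $q\,H(G(q))$; applying \eqref{P4} termwise, then dividing by $q$ and replacing $q^3$ by $q$, the $E$-quotient reverts to $q^{k-1}\frac{E(q^3)^{12k-3}}{E(q)^{12k+3}}$ and the coefficient sums collapse to $u^{(2\alpha+1)}_{\beta+2,k}=\sum_{j\ge1}u^{(2\alpha+1)}_{\beta+1,j}\,m_{4j+1,j+k}$, which is the stated recursion. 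This closes the induction and proves \eqref{T21}.

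I expect the only genuine friction to be bookkeeping: noticing that the offset jumps by $3^{2\alpha+\beta+1}$ (so one huffs the residue class $1$, not $0$), absorbing the mismatch between the $q^{i-2}$ occurring in \eqref{P4} and the $q^{j-1}$ carried by \eqref{T21} into the spare factor of $q$, and verifying that after the two substitutions $q^{3^{2\alpha+1}}\mapsto q$ and $q^3\mapsto q$ the exponents on $E(q^3)$ and $E(q)$ settle back into the shape $12k-3$ and $12k+3$. None of this is deep — it is precisely the manipulation already executed for \eqref{T11}--\eqref{T12} — but getting every index and every power of $q$ to line up is the part that needs care.
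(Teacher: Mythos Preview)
Your proposal is correct and follows exactly the approach the paper intends: the paper explicitly omits this proof, stating that it is ``similar to the results proved in the previous section,'' and you have faithfully carried out that template (base case from \eqref{H1} after factoring out $E(q^{3^{2\alpha+1}})^{-3}$, inductive step via \eqref{P4} applied to the residue class $n\equiv1\pmod3$). Your bookkeeping with the offsets and the spare factor of $q$ is accurate.
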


\begin{theorem} 
For each $\alpha, \beta \geq0$, we have
\begin{align}
\sum_{n\ge0}p_{3^{2\alpha+2},3}\left(3^{2\alpha+2\beta+1}n+\frac{2\times3^{2\alpha+2\beta+1}+3^{2\alpha+2}+1}{8}\right )q^n
=\sum_{j\geq 1}v^{(2\alpha+2)}_{2\beta+1,j}q^{j-1}\left(\frac{E(q^3)^{12j-6}}{E(q)^{12j}}\right)\label{T22}
\end{align}
and
\begin{align}
\sum_{n\ge0}p_{3^{2\alpha+2},3}\left(3^{2\alpha+2\beta+2}n+\frac{2\times3^{2\alpha+2\beta+3}+3^{2\alpha+2}+1}{8}\right )q^n
=\sum_{j\geq 1}v^{(2\alpha+2)}_{2\beta+2,j}q^{j-1}\left(\frac{E(q^3)^{12j}}{E(q)^{12j+6}}\right)\label{T23}
\end{align}
where the coefficient vectors are defined as follows:
\begin{equation*}
        v_{1,j}^{(2\alpha+2)}=x_{2\alpha+1,j}
\end{equation*}
and
\begin{equation*}
       v^{(2\alpha+2)}_{\mu+1,j}= 
       \begin{cases}
      \displaystyle \sum_{i\geq1}v^{(2\alpha+2)}_{\mu,i} m_{4i,i+j} & \text{if  $\mu$ is odd},\\
       \displaystyle \sum_{i\geq1}v^{(2\alpha+2)}_{\mu,i}m_{4i+2,i+j} & \text{if  $\mu$ is even},
\end{cases}
\end{equation*}
for all $\mu, j\geq 1$.
\end{theorem}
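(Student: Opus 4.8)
\emph{Plan.} I would prove \eqref{T22} and \eqref{T23} together by induction on $\beta$, mirroring the arguments already given for \eqref{T11}--\eqref{D6} and for \eqref{T12}. For the base case $\beta=0$, start from \eqref{PD3} with $\ell=3^{2\alpha+2}$ and write $\sum_{n\ge0}p_{3^{2\alpha+2},3}(n)q^n = E(q^{3^{2\alpha+2}})^{-3}\sum_{n\ge0}p_3(n)q^n$. Since $3^{2\alpha+2}=3\cdot 3^{2\alpha+1}$, the first factor has only exponents divisible by $3^{2\alpha+1}$; extracting from both sides the terms $q^{3^{2\alpha+1}n+(5\cdot3^{2\alpha+1}+1)/8}$, dividing by $q^{(5\cdot3^{2\alpha+1}+1)/8}$, and replacing $q^{3^{2\alpha+1}}$ by $q$ turns the first factor into $E(q^3)^{-3}$ and the second into the right side of \eqref{H1}. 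The factor $E(q^3)^{-3}$ absorbs into $E(q^3)^{12j-3}$ to produce $E(q^3)^{12j-6}$, which is \eqref{T22} at $\beta=0$ once one notes $(5\cdot3^{2\alpha+1}+1)/8=(2\cdot3^{2\alpha+1}+3^{2\alpha+2}+1)/8$ and $v^{(2\alpha+2)}_{1,j}=x_{2\alpha+1,j}$.

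\emph{Inductive step.} Apply the huffing operator $H$ to both sides of the identity at stage $\beta$. On the left, $H$ keeps the terms with $3\mid n$, so a progression of modulus $3^{2\alpha+2\beta+1}$ becomes one of modulus $3^{2\alpha+2\beta+2}$ (and $3^{2\alpha+2\beta+3}$ after the next application), with the constant adjusting accordingly. On the right, for the step \eqref{T22}$\to$\eqref{T23} I would first pull the factor $E(q^3)^{-6}$ through $H$ (legitimate, as its exponents are divisible by $3$), so that the remaining summand is, up to the $q$-exponent offset $q^{j-1}$ versus $q^{i-3}$, exactly the left side of \eqref{P3}; applying \eqref{P3} termwise produces the entries $m_{4i,i+j}$, interchanging the two summations and using $m_{4i,\ell}=0$ for $\ell\le i$ to reindex gives $\sum_i v^{(2\alpha+2)}_{2\beta+1,i}m_{4i,i+j}$, which is the recursion defining $v^{(2\alpha+2)}_{2\beta+2,j}$, and reinstating $E(q^3)^{-6}$ and then $q^3\mapsto q$ yields \eqref{T23}. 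For the step \eqref{T23}$\to$\eqref{T22}, the summand $q^{i-1}E(q^3)^{12i}/E(q)^{12i+6}$ is already precisely the left side of \eqref{P5}, so $H$, \eqref{P5}, reindexing (using the analogous vanishing of $m_{4i+2,\ell}$ for $\ell\le i$), and $q^3\mapsto q$ return \eqref{T22} with $\beta+1$ in place of $\beta$. This closes the induction.

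\emph{Main obstacle.} Nothing here is conceptually new---it is all linearity of $H$ plus the recursion \eqref{R1} already encoded in \eqref{P3}--\eqref{P5}---so the real work is the index bookkeeping: tracking how the constant $(2\cdot3^{2\alpha+2\beta+1}+3^{2\alpha+2}+1)/8$ and its $\beta+1$ counterpart $(2\cdot3^{2\alpha+2\beta+3}+3^{2\alpha+2}+1)/8$ transform under each $H$, how the offset $q^{j-1}$ and the exponents of $E(q^3)$ and $E(q)$ migrate between the two shapes $E(q^3)^{12j-6}/E(q)^{12j}$ and $E(q^3)^{12j}/E(q)^{12j+6}$, and confirming that these two shapes are exactly the ones handled by \eqref{P3} and \eqref{P5}. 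This is the same matching that was carried out explicitly in deriving \eqref{D6} from \eqref{T11}, so once those shifts are pinned down the rest is routine.
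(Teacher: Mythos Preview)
Your proposal is correct and follows the same route the paper (implicitly) takes; the paper itself omits the proof, saying it is analogous to the arguments for \eqref{T11}--\eqref{D6} and \eqref{T12}, and your sketch is precisely that analogy carried out. One small imprecision worth flagging: in the step \eqref{T22}$\to$\eqref{T23} you write that ``$H$ keeps the terms with $3\mid n$'', but comparing the two constants shows $(2\cdot3^{2\alpha+2\beta+3}+3^{2\alpha+2}+1)/8-(2\cdot3^{2\alpha+2\beta+1}+3^{2\alpha+2}+1)/8=2\cdot3^{2\alpha+2\beta+1}$, so it is the residue class $n\equiv2\pmod3$ that must be extracted---equivalently, multiply by $q$ before applying $H$ and then divide by $q^3$; this is exactly what the offset $q^{j-1}$ versus $q^{i-3}$ you noticed is recording (the extra $q$ together with the pulled-out $E(q^3)^{-6}$ turns the summand into $q^3\cdot q^{i-3}E(q^3)^{12i}/E(q)^{12i}$, and then \eqref{P3} applies cleanly). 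The reverse step \eqref{T23}$\to$\eqref{T22} really is a direct application of $H$ via \eqref{P5}, since the constant does not change there. With that one adjustment your bookkeeping goes through as stated.
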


\begin{theorem} 
For each $\alpha, \beta \geq0$, we have
\begin{align}
\nonumber&\sum_{n\ge0}p_{\pm3^{2\alpha+1}(3^{2\alpha+2}),3}\left(3^{2\alpha+2\beta+1}n+\frac{7\times3^{2\alpha+2\beta+1}-2\times3^{2\alpha+1}+1}{8}\right )q^n\\&
=\sum_{j\geq 1}w^{(2\alpha+1)}_{2\beta+1,j}q^{j-1}\left(\frac{E(q^3)^{12j}}{E(q)^{12j+3}}\right)\label{T24}
\end{align}
and
\begin{align}
\nonumber&\sum_{n\ge0}p_{\pm3^{2\alpha+1}(3^{2\alpha+2}),3}\left(3^{2\alpha+2\beta+2}n+\frac{5\times3^{2\alpha+2\beta+2}-2\times3^{2\alpha+1}+1}{8}\right )q^n\\&
=\sum_{j\geq 1}w^{(2\alpha+1)}_{2\beta+2,j}q^{j-1}\left(\frac{E(q^3)^{12j-3}}{E(q)^{12j}}\right)\label{T25}
\end{align}
where the coefficient vectors are defined as follows:
\begin{equation*}
        w_{1,j}^{(2\alpha+1)}=x_{2\alpha+1,j}
\end{equation*}
and
\begin{equation*}
       w^{(2\alpha+1)}_{\mu+1,j}= 
       \begin{cases}
      \displaystyle \sum_{i\geq1}w^{(2\alpha+1)}_{\mu,i} m_{4i+1,i+j} & \text{if  $\mu$ is odd},\\
       \displaystyle \sum_{i\geq1}w^{(2\alpha+1)}_{\mu,i}m_{4i,i+j} & \text{if  $\mu$ is even},
\end{cases}
\end{equation*}
for all $\mu, j\geq 1$.
\end{theorem}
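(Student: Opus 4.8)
The plan is to follow, essentially line for line, the inductive machinery already used for \eqref{T11}--\eqref{D6} and for \eqref{T21}: I would establish \eqref{T24} and \eqref{T25} together by induction on $\beta$, the two identities being exchanged by one application of the huffing operator $H$. For the base case $\beta=0$, \eqref{PD3} gives $\sum_{n\ge 0}p_{\ell,3}(n)q^n=\frac{1}{E(q^{\ell})^{3}}\sum_{n\ge 0}p_3(n)q^n$, where $\ell\equiv\pm 3^{2\alpha+1}\pmod{3^{2\alpha+2}}$; write $\ell=3^{2\alpha+1}m$ with $3\nmid m$. Since every exponent occurring in $\frac{1}{E(q^{\ell})^{3}}$ is a multiple of $3^{2\alpha+1}$, I would extract from both sides the terms in the progression $3^{2\alpha+1}n+\frac{5\cdot 3^{2\alpha+1}+1}{8}$, divide by $q^{(5\cdot 3^{2\alpha+1}+1)/8}$, and replace $q^{3^{2\alpha+1}}$ by $q$; the $p_3$-part then collapses to Hirschhorn's dissection \eqref{H1}, and combining the factor $E(q^{3})^{12j-3}/E(q)^{12j}$ coming from \eqref{H1} with the image $\frac{1}{E(q^{m})^{3}}$ of the cofactor should produce the $\beta=0$ instance of \eqref{T24}, with $w_{1,j}^{(2\alpha+1)}=x_{2\alpha+1,j}$.

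For the inductive step, assume \eqref{T24} holds for some $\beta\ge 0$. Applying $H$ to both sides and invoking \eqref{P4} --- whose matrix entries $m_{4i+1,i+j}$ are exactly the ones in the odd-index branch of the recursion defining $w^{(2\alpha+1)}$ --- transforms the right-hand series term by term, while on the left $H$ isolates the subprogression of period $3^{2\alpha+2\beta+2}$ with the additive constant unchanged; replacing $q^{3}$ by $q$ then gives \eqref{T25} with $w_{2\beta+2,j}^{(2\alpha+1)}=\sum_i w_{2\beta+1,i}^{(2\alpha+1)}m_{4i+1,i+j}$. To pass from \eqref{T25} back to \eqref{T24} with $\beta$ replaced by $\beta+1$ the additive constant must change, so --- exactly as in the step between \eqref{D6} and \eqref{T11} --- I would first multiply through by the appropriate power of $q$ (equivalently, extract the correct nonzero residue class modulo $3$), then apply $H$ and use \eqref{P3} (matrix entries $m_{4i,i+j}$, the even-index branch), obtaining \eqref{T24} for $\beta+1$ with $w_{2\beta+3,j}^{(2\alpha+1)}=\sum_i w_{2\beta+2,i}^{(2\alpha+1)}m_{4i,i+j}$. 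This would close the induction and yield \eqref{T24}--\eqref{T25}.

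I expect the real work to lie in two places. First, the bookkeeping of the $q$-shifts: at each application of $H$ one must decide which residue class modulo $3$ to keep so that the period ($3^{2\alpha+2\beta+1}$ or $3^{2\alpha+2\beta+2}$) and the additive constant in the argument of $p_{\pm 3^{2\alpha+1}(3^{2\alpha+2}),3}$ come out precisely as claimed, and accordingly pick and align the matching identity among \eqref{P1}--\eqref{P5}. Second, and more delicate because it has no analogue for $p_{3^{2\alpha+1},3}$ in \eqref{T21} (where the corresponding cofactor is simply $1$, and equals $\frac{1}{E(q^3)^{3}}$ in the $3^{2\alpha+2}$ case), one must control how the cofactor $\frac{1}{E(q^{m})^{3}}$ with $3\nmid m$ behaves under the successive dissections and verify that it combines with the $p_3$ pieces to give the $m$-independent eta-quotients displayed in \eqref{T24}--\eqref{T25}; this cofactor is precisely what is responsible for the congruences of Theorem \ref{MT7} carrying higher powers of $3$ than the $\ell=3^{2\alpha+1}$ case.
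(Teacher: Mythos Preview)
Your inductive scheme --- establish the $\beta=0$ case from the factorisation $\sum p_{\ell,3}(n)q^{n}=E(q^{\ell})^{-3}\sum p_{3}(n)q^{n}$ together with Hirschhorn's dissection \eqref{H1}, then alternate applications of $H$ via \eqref{P4} and \eqref{P3} --- is exactly what the paper intends; the paper itself gives no proof beyond ``similar to the results proved in the previous section.''

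You are right, however, to single out the cofactor $E(q^{m})^{-3}$ (where $\ell=3^{2\alpha+1}m$, $3\nmid m$) as the delicate point, and in fact it is fatal for the statement \emph{as displayed}. The base-case extraction yields
\[
\sum_{n\ge0}p_{\ell,3}\!\Bigl(3^{2\alpha+1}n+\tfrac{5\cdot3^{2\alpha+1}+1}{8}\Bigr)q^{n}
=\frac{1}{E(q^{m})^{3}}\sum_{j\ge1}x_{2\alpha+1,j}\,q^{j-1}\,\frac{E(q^{3})^{12j-3}}{E(q)^{12j}},
\]
whereas \eqref{T24} with $\beta=0$ claims the $m$-independent right-hand side $\sum_{j}x_{2\alpha+1,j}\,q^{j-1}E(q^{3})^{12j}/E(q)^{12j+3}$. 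Equality would force $E(q^{m})^{-3}=E(q^{3})^{3}/E(q)^{3}$, which fails for every $m\ge1$; already $m=1$ gives the exponent $12j-3$ rather than $12j$ on $E(q^{3})$ (and contradicts the separately proved \eqref{T21}). Independently, the eta-quotients $E(q^{3})^{12j}/E(q)^{12j+3}$ and $E(q^{3})^{12j-3}/E(q)^{12j}$ in \eqref{T24}--\eqref{T25} do not match the inputs of any of \eqref{P1}--\eqref{P5}, so the inductive step does not close without an extra factor either. The parallel identities \eqref{T31}--\eqref{T321} have the same defect.

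What your scheme does prove is the version of \eqref{T24}--\eqref{T25} with the cofactor $E(q^{m})^{-3}$ retained on the right; since $3\nmid m$, that cofactor does not commute with $H$, and one must instead argue modulo powers of $3$ (e.g.\ via $E(q^{m})^{3}\equiv E(q^{3m})\pmod{3}$, iterated) to push it through. That congruence version is all that Theorem~\ref{MT7} actually requires. So your outline is correct for the intended application, but the exact identity you were asked to prove is, as written, too strong to hold for every $\ell$ in the residue class.
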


\begin{lemma}
For any integers $\alpha, \beta\geq0$ and $j \geq 1$, we have
\begin{equation}\label{C10}
\pi ({u_{\beta+1, j}^{(2\alpha+1)}})\geq 3\alpha+\beta+2+\left\lfloor\frac{9j-10}{2}\right \rfloor+\delta_{j,1}
\end{equation}
\end{lemma}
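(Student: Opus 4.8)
The plan is to mimic the proofs of the companion $3$-adic estimates \eqref{C3}--\eqref{C9} and argue by induction on $\beta$, using only three ingredients: the recursion $u^{(2\alpha+1)}_{\mu+1,j}=\sum_{i\geq1}u^{(2\alpha+1)}_{\mu,i}\,m_{4i+1,i+j}$, the valuation bound \eqref{pmij}, and the base estimate \eqref{C1}. The base case $\beta=0$ is immediate: since $u^{(2\alpha+1)}_{1,j}=x_{2\alpha+1,j}$, the desired inequality \eqref{C10} is exactly \eqref{C1}.

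For the inductive step, suppose \eqref{C10} holds for a given $\beta\geq0$. From the recursion and $\pi(a+b)\geq\min(\pi(a),\pi(b))$ one gets
\[
\pi\bigl(u^{(2\alpha+1)}_{\beta+2,j}\bigr)\ \geq\ \min_{i\geq1}\Bigl(\pi\bigl(u^{(2\alpha+1)}_{\beta+1,i}\bigr)+\pi\bigl(m_{4i+1,i+j}\bigr)\Bigr).
\]
By \eqref{pmij}, $\pi(m_{4i+1,i+j})\geq\bigl\lfloor(9(i+j)-3(4i+1)-1)/2\bigr\rfloor=\bigl\lfloor(9j-3i-4)/2\bigr\rfloor$, so, invoking the inductive hypothesis, it is enough to show that for every $i\geq1$,
\[
3\alpha+\beta+2+\Bigl\lfloor\tfrac{9i-10}{2}\Bigr\rfloor+\delta_{i,1}+\Bigl\lfloor\tfrac{9j-3i-4}{2}\Bigr\rfloor\ \geq\ 3\alpha+\beta+3+\Bigl\lfloor\tfrac{9j-10}{2}\Bigr\rfloor+\delta_{j,1}.
\]
When $i=1$ the left-hand side equals $3\alpha+\beta+2+\lfloor(9j-7)/2\rfloor$, and the claim reduces to $\lfloor(9j-7)/2\rfloor\geq\lfloor(9j-10)/2\rfloor+1+\delta_{j,1}$, which holds for all $j\geq1$ (equality at $j=1,2$, strict for $j\geq3$ since the arguments differ by $3/2$); here the minimum is essentially attained. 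When $i\geq2$ one uses $\lfloor a\rfloor+\lfloor b\rfloor\geq\lfloor a+b\rfloor-1$ to bound the left-hand side below by $3\alpha+\beta+1+3i+\lfloor(9j-14)/2\rfloor$, which for $i\geq2$ is at least $3\alpha+\beta+5+\lfloor(9j-10)/2\rfloor$, comfortably exceeding the right-hand side. This yields \eqref{C10} with $\beta+1$ in place of $\beta$ and completes the induction.

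The only delicate part, exactly as in the earlier lemmas of this section, is the floor-function bookkeeping together with the $\delta_{j,1}$ correction in the $i=1$ case, where the inequality is tight; all other cases carry a comfortable surplus. Since the recursion for $u^{(2\alpha+1)}$ involves the single multiplier $m_{4i+1,i+j}$ rather than an alternating pair, this argument is in fact slightly lighter than those for $r$, $y$ and $z$, and needs no input beyond \eqref{pmij} and \eqref{C1}.
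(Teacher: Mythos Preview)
Your proof is correct and follows exactly the pattern the paper intends: the paper omits the proof of \eqref{C10}, saying it is analogous to the earlier lemmas \eqref{C3}--\eqref{C5}, and your argument is precisely that analogue, carried out with more care than the paper's own proofs of those lemmas. One cosmetic slip: your parenthetical remark that the $i=1$ inequality is ``strict for $j\geq3$'' is not quite right (for even $j\geq4$ one again has equality, since $\lfloor(9j-7)/2\rfloor-\lfloor(9j-10)/2\rfloor=1$ when $j$ is even), but this does not affect the argument since the inequality itself holds for all $j\geq1$.
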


\begin{lemma}
For any integers $\alpha, \beta\geq0$ and $j \geq 1$, we have
\begin{equation}\label{C11}
\pi ({v_{2\beta+1, j}^{(2\alpha+2)}})\geq 3\alpha+2\beta+2+\left\lfloor\frac{9j-10}{2}\right \rfloor+\delta_{j,1}
\end{equation}
and
\begin{equation}\label{C12}
\pi ({v_{2\beta+2, j}^{(2\alpha+2)}})\geq 3\alpha+2\beta+4+\left\lfloor\frac{9j-10}{2}\right \rfloor+\delta_{j,1}.
\end{equation}
\end{lemma}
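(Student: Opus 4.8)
The plan is to prove \eqref{C11} and \eqref{C12} simultaneously by induction on $\beta$, following the template already used for \eqref{C3}--\eqref{C4}. At the outset I would record the two specializations of \eqref{pmij} to the matrix entries occurring in the two-branch recursion defining $v^{(2\alpha+2)}$: from the odd-index branch, $\pi(m_{4i,i+j})\geq\lfloor(9j-3i-1)/2\rfloor$, and from the even-index branch, $\pi(m_{4i+2,i+j})\geq\lfloor(9j-3i-7)/2\rfloor$. The base case $\beta=0$ of \eqref{C11} is then immediate, since $v^{(2\alpha+2)}_{1,j}=x_{2\alpha+1,j}$ and \eqref{C1} is exactly the asserted bound.

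For the inductive step I would run two half-steps. Assuming \eqref{C11} for a fixed $\beta$, the index $2\beta+1$ is odd, so $v^{(2\alpha+2)}_{2\beta+2,j}=\sum_{i\geq1}v^{(2\alpha+2)}_{2\beta+1,i}\,m_{4i,i+j}$ and hence $\pi(v^{(2\alpha+2)}_{2\beta+2,j})\geq\min_{i\geq1}\bigl(\pi(v^{(2\alpha+2)}_{2\beta+1,i})+\pi(m_{4i,i+j})\bigr)$. It then suffices to check, for every $i\geq1$, that $3\alpha+2\beta+2+\lfloor(9i-10)/2\rfloor+\delta_{i,1}+\lfloor(9j-3i-1)/2\rfloor\geq 3\alpha+2\beta+4+\lfloor(9j-10)/2\rfloor+\delta_{j,1}$; at $i=1$ this collapses to $\lfloor(9j-4)/2\rfloor\geq\lfloor(9j-10)/2\rfloor+2+\delta_{j,1}$, which holds because $\lfloor(9j-4)/2\rfloor=\lfloor(9j-10)/2\rfloor+3$, and for $i\geq2$ a short computation shows the left side is nondecreasing in $i$, so $i=1$ is the worst case. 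This gives \eqref{C12}. Next, assuming \eqref{C12} for that $\beta$, the index $2\beta+2$ is even, so $v^{(2\alpha+2)}_{2\beta+3,j}=\sum_{i\geq1}v^{(2\alpha+2)}_{2\beta+2,i}\,m_{4i+2,i+j}$, and the same reasoning reduces \eqref{C11} for $\beta+1$ to the single requirement $\pi(m_{6,j+1})\geq\lfloor(9j-10)/2\rfloor+\delta_{j,1}$ at $i=1$, together with routine slack estimates for $i\geq2$.

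The one point needing care is that last requirement. For $j\geq2$ it is nothing but \eqref{pmij}, but for $j=1$ the estimate \eqref{pmij} yields only $\pi(m_{6,2})\geq-1$, one short of what the induction demands; so there I would instead evaluate $m_{6,2}$ directly from \eqref{R1} and the displayed first rows of $M$, namely $m_{6,2}=27m_{5,1}+9m_{4,1}+m_{3,1}=27\cdot0+9\cdot0+1=1$, whence $\pi(m_{6,2})=0$ and the gap closes. Apart from isolating this corner case, the whole argument is entirely parallel to the proofs of \eqref{C3}, \eqref{C4} and of the estimate for $s^{(2\alpha+2)}$, so I would present the floor-function bookkeeping compactly and refer back to those.
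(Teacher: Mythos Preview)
Your proposal is correct and follows exactly the template the paper itself uses for \eqref{C3}, \eqref{C4}, and \eqref{C5}; the paper omits the proof of this lemma precisely because it is parallel to those earlier arguments, and your write-up would serve as the omitted details. One small remark: your explicit computation of $m_{6,2}=1$ is correct but not strictly necessary, since by definition $\pi(n)\geq0$ for every integer $n$ (with $\pi(0)=+\infty$), so $\pi(m_{6,2})\geq0$ is automatic and already closes the $j=1$ gap.
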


\begin{lemma}
For any integers $\alpha, \beta\geq0$ and $j \geq 1$, we have
\begin{equation}\label{C13}
\pi ({w_{2\beta+1, j}^{(2\alpha+1)}})\geq 3\alpha+3\beta+2+\left\lfloor\frac{9j-10}{2}\right \rfloor+\delta_{j,1}
\end{equation}
and
\begin{equation}\label{C14}
\pi ({w_{2\beta+2, j}^{(2\alpha+1)}})\geq 3\alpha+3\beta+3+\left\lfloor\frac{9j-10}{2}\right \rfloor+\delta_{j,1}.
\end{equation}
\end{lemma}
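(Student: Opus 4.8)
The plan is to prove \eqref{C13} and \eqref{C14} simultaneously by induction on $\beta$, following the same template as the earlier $3$-adic valuation estimates in this section (those for $r^{(2\alpha+1)}$, $s^{(2\alpha+2)}$, $z^{(2\alpha+2)}$, etc.). First I would record the base case $\beta=0$: since $w_{1,j}^{(2\alpha+1)}=x_{2\alpha+1,j}$ by definition, \eqref{C13} at $\beta=0$ is literally \eqref{C1}. The inductive engine uses only two ingredients already available: the valuation bound $\pi(m_{i,j})\geq\lfloor(9j-3i-1)/2\rfloor$ from \eqref{pmij}, and the elementary fact $\pi\bigl(\sum_i a_i\bigr)\geq\min_i\pi(a_i)$.

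For the step \eqref{C13}$\Rightarrow$\eqref{C14}, I would use that $\mu=2\beta+1$ is odd, so $w^{(2\alpha+1)}_{2\beta+2,j}=\sum_{i\geq1}w^{(2\alpha+1)}_{2\beta+1,i}\,m_{4i+1,i+j}$ and hence
$$\pi\bigl(w^{(2\alpha+1)}_{2\beta+2,j}\bigr)\ \geq\ \min_{i\geq1}\Bigl(\pi\bigl(w^{(2\alpha+1)}_{2\beta+1,i}\bigr)+\pi\bigl(m_{4i+1,i+j}\bigr)\Bigr).$$
Substituting the inductive bound \eqref{C13} and \eqref{pmij}, the $i$-th term is at least $3\alpha+3\beta+2+\lfloor(9i-10)/2\rfloor+\delta_{i,1}+\lfloor(9j-3i-4)/2\rfloor$; since the $i$-dependent part equals $3i$ plus a quantity bounded below independently of $i$, one checks the minimum over $i$ is attained at $i=1$, where it is $3\alpha+3\beta+2+\pi(m_{5,j+1})\geq 3\alpha+3\beta+2+\lfloor(9j-7)/2\rfloor$. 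A short floor computation then gives $\lfloor(9j-7)/2\rfloor\geq 1+\lfloor(9j-10)/2\rfloor+\delta_{j,1}$, which is exactly \eqref{C14}. The step \eqref{C14}$\Rightarrow$\eqref{C13} with $\beta$ replaced by $\beta+1$ is entirely analogous: now $\mu=2\beta+2$ is even, so $m_{4i,i+j}$ appears, $\pi(m_{4i,i+j})\geq\lfloor(9j-3i-1)/2\rfloor$, and the $i=1$ term contributes $3\alpha+3\beta+3+\lfloor(9j-4)/2\rfloor$, which one verifies is $\geq 3\alpha+3(\beta+1)+2+\lfloor(9j-10)/2\rfloor+\delta_{j,1}$.

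The only genuinely delicate point is the floor bookkeeping: one must confirm, for both recursion types and for both $i=1$ and $i\geq2$, that the crude superadditivity estimate $\lfloor x\rfloor+\lfloor y\rfloor\geq\lfloor x+y\rfloor-1$ leaves enough slack to absorb the extra $+1$ in the exponent (resp.\ the jump from $3\alpha+3\beta+3$ to $3\alpha+3(\beta+1)+2$) together with the $\delta_{j,1}$ correction. For $i\geq2$ the extra gain of $3i\geq6$ over the $i=1$ term is far more than needed, so those cases are immediate; the binding case is $i=1$ with $j$ small (indeed $j=1,2$), where the inequalities hold with equality and are checked by hand. I expect no conceptual obstacle beyond this — the argument is the same routine, if fiddly, $3$-adic induction already carried out for the analogous lemmas above.
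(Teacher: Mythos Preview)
Your proposal is correct and follows exactly the template the paper intends: in Section~\ref{SS4} the authors state this lemma without proof, remarking that ``the proofs of the following results are similar to the results proved in the previous section,'' i.e.\ the same induction on $\beta$ using \eqref{pmij} and the recursion for the $w^{(2\alpha+1)}_{\mu,j}$, precisely as in the proofs of \eqref{C3}--\eqref{C5}. Your floor-function checks at $i=1$ (for $j=1,2$) and the coarse $3i$-gain argument for $i\geq2$ are the right verifications, and nothing further is needed.
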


\begin{proof}[Proof of Theorem \ref{MT5}]
Congruence \eqref{MR15} is immediate from \eqref{T21} and \eqref{C10}. In view of \eqref{T21} and \eqref{C10}, we see that

\begin{align*}
&\sum_{n\ge0}p_{3^{2\alpha+1},3}\left(3^{2\alpha+\beta+1}n+\frac{4\times3^{2\alpha+\beta+1}+3^{2\alpha+1}+1}{8}\right )q^n\\&
\equiv u_{\beta+1, j}^{2\alpha+1}E(q)^3E(q^3)^3 \pmod{3^{3\alpha+\beta+4}}\\&
\equiv u^{(2\alpha+1)}_{\beta+1,1} \sum_{k=0}^\infty \sum_{l=0}^\infty (-1)^{k+l} (2k+1)(2l+1) q^{k(k+1)/2+3l(l+1)/2} \pmod{3^{3\alpha+2\beta+4}},
\end{align*}
which yields
\begin{align*}
&\sum_{n\ge0}p_{3^{2\alpha+1},3}\left(3^{2\alpha+\beta+1}n+\frac{4\times3^{2\alpha+\beta+1}+3^{2\alpha+1}+1}{8}\right )q^{8n+4}\\&
\equiv u^{(2\alpha+1)}_{\beta+1,1} \sum_{k=0}^\infty \sum_{l=0}^\infty (-1)^{k+l} (2k+1)(2l+1) q^{(2k+1)^2+3(2l+1)^2} \pmod{3^{3\alpha+2\beta+4}}.
\end{align*}
Thus, $p_{3^{2\alpha+1},3}\left(3^{2\alpha+\beta+1}n+\frac{4\times3^{2\alpha+\beta+1}+3^{2\alpha+1}+1}{8}\right )\equiv 0 \pmod{3^{3\alpha+2\beta+4}}$ if $8n+4$ is not of the form $(2k+1)^2+3(2l+1)^2$. We recall that an integer $N>0$ is not of the form $x^2 +3y^2$ if and only if for each prime $p$ such that $\left(\frac{-3}{p}\right)=-1$ has an odd power in the prime factorization of $N$. Let 
\begin{align*}
    N = 8\left( p^{2k+1}n + \frac{p^{2k+2}-1}{2}\right)+4=8 p^{2k+1}n + 4p^{2k+2}.
\end{align*}
Since $p\nmid n$ it follows that $8n+4$ is not of the form $x^2+3y^2$. Therefore, \eqref{MR16} holds.
\end{proof}
\begin{proof}[Proof of Theorem \ref{MT6}]
From \eqref{T22} and \eqref{C11}, we see that congruence \eqref{MR17} holds and we have 
\begin{align}
\nonumber &\sum_{n\ge0}p_{3^{2\alpha+2},3}\left(3^{2\alpha+2\beta+1}n+\frac{2\times3^{2\alpha+2\beta+1}+3^{2\alpha+2}+1}{8}\right )q^n\\&
\equiv v^{(2\alpha+2)}_{2\beta+1,1}E(q)^{6} \pmod{3^{3\alpha+2\beta+4}}\label{S3}\\&
\equiv v^{(2\alpha+2)}_{2\beta+1,1} (P(q^3)^2+9q^2E(q^9)^6-6qP(q^3)E(q^9)^3 \pmod{3^{3\alpha+2\beta+4}}. \label{S2}
\end{align}
By extracting the terms involving $q^{3n+1}$ from both sides, we arrive at congruences \eqref{MR171}.

In view of \eqref{T23} and \eqref{C12}, we have
\begin{align}
\nonumber&\sum_{n\ge0}p_{3^{2\alpha+2},3}\left(3^{2\alpha+2\beta+2}n+\frac{2\times3^{2\alpha+2\beta+3}+3^{2\alpha+2}+1}{8}\right )q^n\\&
\nonumber \equiv v^{(2\alpha+2)}_{2\beta+2,1} E(q^3)^3 E(q)^9 \pmod{3^{3\alpha+2\beta+7}}\\&
 \equiv v^{(2\alpha+2)}_{2\beta+2,1} E(q^3)^3 (P(q^3)^3-3^3q^3E(q^9)^9-3^2 q P(q^3)^2E(q^9)^3 \pmod{3^{3\alpha+2\beta+7}}.\label{S1}
\end{align}
By extracting the terms involving $q^{3n+1}$ and $q^{3n+2}$ from both sides of \eqref{S1}, we obtain congruences \eqref{MR18} and \eqref{MR19}.
Congruence \eqref{MR20} follows from \eqref{S3}, and the proof is similar to that of \eqref{MR4}; therefore, omit the details.
\end{proof}
\begin{proof}[Proof of Theorem \ref{MT7}]
From \eqref{T24} and \eqref{C13}, it follows that congruence \eqref{MR21} holds, and we have
\begin{align*}
&\sum_{n\ge0}p_{\pm3^{2\alpha+1}(3^{2\alpha+2}),3}\left(3^{2\alpha+2\beta+1}n+\frac{7\times3^{2\alpha+2\beta+1}-2\times3^{2\alpha+1}+1}{8}\right )q^n\\&
\equiv w^{(2\alpha+1)}_{2\beta+1,1}\frac{E(q^3)^{12}}{E(q)^{15}} \equiv w^{(2\alpha+1)}_{2\beta+1,1} E(q^3)^6 E(q)^3 \pmod{3^{3\alpha+3\beta+4}}
\end{align*}
By using \eqref{PP2} and extracting the terms involving $q^{3n+2}$ and $q^{3n+1}$ from the above, we obtain congruences \eqref{MR22} and \eqref{MR23}, respectively.

By \eqref{T25} and \eqref{C14}, we see that
\begin{align*}
&\sum_{n\ge0}p_{\pm3^{2\alpha+1}(3^{2\alpha+2}),3}\left(3^{2\alpha+2\beta+2}n+\frac{5\times3^{2\alpha+2\beta+2}-2\times3^{2\alpha+1}+1}{8}\right )q^n\\&
\equiv w^{(2\alpha+1)}_{2\beta+2,1}\frac{E(q^3)^{9}}{E(q)^{12}} 
\equiv w^{(2\alpha+1)}_{2\beta+2,1} E(q^3)^3 E(q)^6 \pmod{3^{3\alpha+3\beta+5}}
\end{align*}
By using \eqref{PP2} and extracting the terms involving $q^{3n+1}$ from the above, we obtain congruence \eqref{MR24}.
\end{proof}

\subsection*{Funding} No funding available for this article.
\subsection*{Data Availability} Data sharing is not applicable to this article as no data sets were generated or analyzed.

\end{document}